\journal{Studia Scientiarum Mathematicarum Hungarica}
\newtheorem{theorem}{Theorem}[section]
\newtheorem{definition}{Definition}[section]
\newtheorem{proposition}{Proposition}[section]
\newtheorem{corollary}{Corollary}[section]
\newtheorem{lemma}{Lemma}[section]
\newtheorem{example}{Example}[section]
\newtheorem{problem}{Problem}[section]
\renewcommand{\Pr}{\mathbb{P}}
\newcommand{\E}{\mathbb{E}}
\begin{document}

\begin{frontmatter}
\title{A characterization of signed discrete infinitely divisible distributions \footnote{Accepted for publication in
Studia Scientiarum Mathematicarum, 10 October 2016.}}
\author[label1]{Huiming Zhang }
\ead{zhanghuiming@pku.edu.cn}
\address[label1]{School of Mathematical Sciences, Peking University, Beijing, 100871, China}
\address[label2]{Department of Mathematics and Statistics, Central China Normal University, Wuhan, 430079, China}
\author[label2]{Bo Li}
\ead{haoyoulibo@163.com}
\author[label3]{G. Jay Kerns}
\ead{gkerns@ysu.edu}
\address[label3]{Department of Mathematics and Statistics, Youngstown State University, Youngstown, Ohio 44555-0002 USA}

\begin{abstract}  In this article, we give some reviews concerning negative probabilities model and quasi-infinitely divisible at the beginning. We next extend Feller's characterization of discrete infinitely divisible distributions to signed discrete infinitely divisible distributions, which are discrete pseudo compound Poisson (DPCP) distributions with connections to the L{\'e}vy-Wiener theorem. This is a special case of an open problem which is proposed by \cite{sato14}, Chaumont and Yor (2012). An analogous result involving characteristic functions is shown for signed integer-valued  infinitely divisible distributions. We show that many distributions are DPCP by the non-zero p.g.f.~property, such as the mixed Poisson distribution and fractional Poisson process. DPCP has some bizarre properties, and one is that the parameter $\lambda $ in the DPCP class cannot be arbitrarily small.
\end{abstract}
\begin{keyword}
discrete distribution \sep quasi infinitely divisible \sep pseudo compound Poisson \sep signed measure \sep negative  probability \sep absolutely convergent Fourier series \sep J{\o}rgensen set.

MSC2010: 60E07 60E10 28A20 42A32
\end{keyword}

\end{frontmatter}

\section{Convolutions of signed measure model and quasi-infinitely divisible}

\cite{szekely05} spoke of flipping two ``half-coins'' (which have
infinitely many sides numbered $0, 1, 2, \ldots$ whose even values are
assigned negative probabilities) to obtain a fair coin with outcomes 0
or 1 with probability $1/2$ each. The negative probabilities arise
because his probability generating function (p.g.f.)  $G(z) =
\sqrt{0.5 + 0.5z} $ has negative coefficients. He went on to
consider the general $n$-th root of a p.g.f.~as a generating function
with negative coefficients. In this work we continue along the same
lines. In short, the aim of this paper is to determine necessary and
sufficient conditions on a discrete distribution
such that the ${[G(z)]^{\frac{1}{n}}}$ (or ${[\varphi (\theta
  )]^{\frac{1}{n}}}$) is also the p.g.f.~(or characteristic function)
of a signed measure with bounded total variation. Sz\'{e}kely used the word ``signed infinitely divisible'' to describe a phenomenon that writing a discrete probability mass as a convolution of signed point measures.

Notice that central to the inversion problem of the Central Limit
Theorem is the search for some characteristic function $\varphi
(\theta)$ such that ${[\varphi (\theta )]^{\frac{1}{n}}}$ is also a
characteristic function. This is indeed the very definition of an
infinitely divisible distribution, or the deconvolution problem. In the foundation of this
body of work is the fundamental L{\'e}vy-Khinchine result: ``A
distribution is infinitely divisible if and only if it has a
L{\'e}vy-Khinchine representation.''

Continuing along these lines, \cite{sato14}, \cite{nakamura13}
proposed the following definition of quasi infinite divisibility for distributions having a
L{\'e}vy-Khinchine representation but with \emph{signed} L{\'e}vy
measure. By constructing a complete Riemann zeta distribution
corresponding to Riemann hypothesis, \cite{nakamura15} showed that a
complete Riemann zeta distribution is quasi-infinitely divisible for
some conditions. Based on quantum physics, \cite{demni15} constructed a generalized Poisson distribution and derived a L{\'e}vy-Khintchine-type representation of its characteristic function with signed L{\'e}vy measure.

A random variable $X$ on $\Bbb{R}$ is called
\emph{quasi-infinitely divisible} if the characteristic function of
$X$ has the following form.

\begin{definition}[Quasi-infinitely divisible] \label{def:quasi-ID} A
  distribution $\mu$ on $\Bbb{R}$ is said to be quasi-infinitely
  divisible if its characteristic function has the form
  \begin{equation} \label{eq:levy-khinchine}
    \E \mathrm{e}^{\mathrm{i}\theta X} = \exp \left\{
      a\mathrm{i}\theta - \frac{1}{2} \sigma^{2} \theta^{2} + \int_{\Bbb{R}}
      \left( \mathrm{e}^{\mathrm{i}\theta x} - 1 - \frac{\mathrm{i}\theta x}{1 +
          x^{2}} \right) \, \nu (\mathrm{d}x) \right\},
  \end{equation}
with $a \in \Bbb{R}$, $\sigma \geqslant 0$, and corresponding measure $\nu$
a bounded signed measure (that is, a \emph{quasi-L{\'e}vy measure}) on $\Bbb{R}$
with total variation measure $\left| \nu \right|$ satisfying
$\smallint_{\Bbb{R}}\min ({x^2},1)\left| \nu \right|(\mathrm{d}x) < \infty
$.
\end{definition}
The above definition also appears in Exercise 12.3 of \cite{sato13},
p.~66, where it is shown that $X$ is not infinitely divisible (in the
classical sense) if $\nu$ is a signed measure. The L{\'e}vy-Khinchine
representation with signed L{\'e}vy measure is unique; see
Exercise 12.3 in \cite{sato13}.

\begin{problem}
  Find a necessary and sufficient condition for the L{\'e}vy-Khinchine
  representation with signed L{\'e}vy measure to hold.
\end{problem}

This is an open problem posed by Professor Ken-iti Sato, see p29 of
\cite{sato14}. When $X$ in (\ref{eq:levy-khinchine}) is non-negative
(a ``subordinator'' version of (\ref{eq:levy-khinchine})), it is given
as an unsolved problem in Exercise 4.15(6) of \cite{chaumont12}.

Denote the nonnegative integers by $ \Bbb N= \{ 0,1,2, \ldots \} $. Let $\nu$ be a signed point measure on $\Bbb N$ and remove the normal
component in \eqref{eq:levy-khinchine}. Then the characteristic
function of the DPCP distribution (see Definition 2.1 below) is a discrete version of the
L{\'e}vy-Khinchine representation with signed L{\'e}vy measure.

\cite{baishanski98} considered a complex-valued (which includes
negative-valued) probability model for $n$-fold convolutions of
i.i.d.~integer-valued ``random variables'' ($X_{1}, \ldots , X_{n}$)
with complex-valued probabilities $\Pr(X_{1} = v) = a_{v}$. The
characteristic function is
$\varphi (\theta ) = \sum a_{v} \mathrm{e}^{\mathrm{i}v\theta }$, with
$\sum a_{v} = 1$, and $\Pr(X_{1} + \cdots + X_{n} = v) = a_{nv}$. He
charted this territory perhaps not for the sake of statisticians or
probabilists, but certainly to the benefit of analysts. His work stemmed from an open problem related to the complex-valued
probability model which was first posed by \cite{beurling38} and later
quoted by \cite{helson53}. It is called the problem of ``Norms of powers
of absolutely convergent Fourier series'' and has been investigated at
length in many, many papers; see \cite{baishanski98} for a review. We
only present the problem here and show its relationship to the DPCP
distribution.
\begin{problem}
  Let $f(\theta ) = \sum\limits_{j = 0}^\infty a_{j} \mathrm{e}^{\mathrm{i}j\theta }
  \mbox{ and } \left\| f \right\| = \sum \limits_{j = 0}^{\infty} \left|
      a_{j} \right| < \infty $.  Under what conditions on $f$ are
  the $\left\| f^{n} \right\|$ bounded?  Discuss the asymptotic
  behavior of $\left\| f^{n} \right\|$ as $n \to \infty$.
\end{problem}

When $a_{v} \geqslant 0$, the behavior of $a_{nv}$ has been firmly
established via the Central Limit Theorem and gives rise to the normal
law in particular, and stable laws in general. In the case of
complex-valued probability, \cite{baishanski98} asks,
\begin{problem}
What is the Central Limit Theorem for complex-valued probabilities?
\end{problem}
\noindent For some results in this direction, see \cite{baez93}
and \cite{baishanski98}.

Here, we consider the case of real-valued probabilities. \cite{van81} proved Kolmogorov's Extension Theorem for finite signed
measures which guarantees that a suitably ``consistent'' collection of
finite-dimensional distributions will define a signed
r.v..~\cite{kemp79} studied the circumstances under which convolutions
of binomial variables and binomial pseudo-variables (with negative
probability density) lead to valid distributions (with positive
probability density). \cite{karymov05} obtained asymptotic
decompositions into convolutions of Poisson signed measure that is appropriate for a broad range
of lattice distributions.

Let $(\Omega ,{\cal F},\mu )$ be a
signed measure space with $\mu ({\Omega}) = 1$ and $(S, {\cal B})$ be
a measurable space. A mapping $X:\Omega \mapsto \Bbb{R}$ is a
\emph{signed random variable} if it is a measurable function from
$(\Omega ,{\cal F})$ to $(S,{\cal B})$.
Every signed random variable $X$ has an associated signed measure. The signed measure is $\mu (B) = \Pr(X \in B) = \Pr({X^{ - 1}}B)$ for
each $B \in {\cal B}$.

\textbf{Remark 1} : In this paper, we write ``r.v.''(or ``distribution'') for a random
variable with ordinary (not negative) probabilities, and we write
``signed r.v.'''(or ``signed distribution'') for a random variable that permits negative
probabilities.

\begin{definition}[Signed probability density] A signed random
  variable $X$ has signed probability distribution
  $\mu (\mathrm{d}x) \in \Bbb{R}$ satisfying
  ${\smallint _{\Omega}}\,\mu (\mathrm{d}x) = 1 \mbox{ with }
  \smallint_{\Omega} \vert \mu \vert (\mathrm{d}x) < \infty $.
\end{definition}

The signed discrete distribution is well-defined provided the total
variation ${\smallint _{\Omega}}\vert\mu\vert(\mathrm{d}x)$ is
finite. Taking a signed discrete distribution as an example,
the absolute convergence of $\sum\limits_{k = 1}^{\infty} a_{k}$
guarantees that all rearrangements of the series are convergent to the
same value. Signed random variables defined this way allow for treatment
analogous to the classical case with the concepts of independence,
expectation, variance, $r^{\mathrm{th}}$ moments, characteristic
functions, \emph{etc}.~operating in the natural way.

Without the condition of absolute convergence the negativity of
$\alpha_{k}$ would cause $\exp \left\{\sum\limits_{i = 1}^{\infty}
  \alpha_{i}\lambda (z^{i} - 1) \right\}$ to be undefinable, since we
know from the Riemann series theorem that a conditionally convergent
series can be rearranged to converge to any desired value.  The next
example drives home this point.

\begin{example}[Discrete uniform distribution]
  Let $X$ be Bernoulli r.v. with probability of success $p=0.5$. The
  logarithm of the p.g.f.~of $X$ is
  \[\ln \left(\frac{1}{2} + \frac{1}{2}z \right) = \ln \frac{1}{2} +
  \sum\limits_{i = 1}^{\infty} \frac{(-1)^{i - 1}}{i} z^{i} = \ln
  2\sum\limits_{i = 1}^{\infty} \frac{(-1)^{i - 1}}{i\ln 2} (z^{i} -
  1).\]
  However, $\sum\limits_{i = 1}^{\infty} \left| a_{i} \right| =
  \sum\limits_{i = 1}^{\infty} \frac{1}{i\ln 2} = \infty$.
\end{example}

The present paper has concerned itself with the study of certain
classes of discrete random variables, in particular, those of the
infinitely divisible variety. We started from William Feller's famous
characterization that all discrete infinitely divisible distributions
are compound Poisson distributed. Now, not all discrete distributions
are infinitely divisible (not, at least, in the classical sense). But
following the idea of Sz\'{e}kely and others we extend our notion of
infinitely divisible to include those distributions whose ${n^{th}}$
root of their p.g.f.~is also a p.g.f.~in a generalized sense. Szekely's ``signed'' ID is based on convolution of signed measures and Sato defines his ``quasi'' ID from L{\'e}vy-Khinchine representation with signed L{\'e}vy measure.
The goal of this paper is to find a connection with these two kinds of generalised ID under the discrete r.v..

The paper is structured as follows: In Section 2, after giving the definition of signed probability model, we present several conditions guaranteeing that a discrete distribution is a signed discrete infinitely divisible distribution (or discrete pseudo compound Poisson). Also, we exemplify some famous discrete distributions which belong to the discrete quasi infinitely divisible distributions, such as the mixed Poisson distribution and the fractional Poisson process. In the same way, in Section 3, we conclude
that a distribution is signed integer-valued ID if and only if it is
integer-valued pseudo compound Poisson. In Section 4, some bizarre properties of signed discrete infinitely divisible distributions are discussed, and we mention a research problem of finding characteristic function's J{\o}rgensen set.

\section{Feller's characterization and extension of signed discrete ID}
\subsection{Discrete pseudo compound Poisson distribution}
Feller's characterization of the compound Poisson distribution states
that a non-negative integer valued r.v.~$X$ is infinitely divisible
(ID) if and only if its distribution is a discrete compound Poisson
distribution. Taking $N$ and $Y_{i}$'s to be independent, $X$ can be
written as
\begin{equation} \label{eq:ID}
X = Y_{1} + Y_{2} +  \cdots  + Y_{N},
\end{equation}
where $N$ is Poisson distributed with parameter $\lambda $ and the
$Y_{i}$'s are independently and identically distributed (i.i.d.)
discrete r.v.'s with $P\{ {Y_1} = k\} = \alpha_{k}$. Hence the
p.g.f.~of the compound Poisson distribution can be written as
\begin{equation}  \label{eq:feller-CP-pgf}
  P(z) = \sum\limits_{k = 0}^{\infty}  P_{k} z^{k} = \exp\left\{ \lambda \sum\limits_{k = 1}^{\infty}  \alpha_{k} (z^{k} - 1) \right\}, \quad \vert z \vert \leqslant 1,\ \lambda  > 0,\ \sum\limits_{k = 1}^{\infty}  \alpha_{k} = 1,\ \alpha_{k} \geqslant 0,
\end{equation}
where $z$ is a real number. (In this paper, all the arguments $z$ of a
p.g.f.~are taken to be real numbers $z \in [-1,1]$.) For more properties and characterizations of discrete compound Poisson
, see \cite{janossy50}, Section 12.2 of \cite{feller68}, Section 9.3 of
\cite{johnson05}, and \cite{zhang16}.

However, Feller neither shows nor claims that the sum of the
coefficients in $ n\left(\root n \of {P(z)} - 1\right)$ is bounded
for any $n$, that is, it leaves the question of
whether
\[\ln P(z) = \mathop {\lim }\limits_{n \to \infty }
\sum\limits_{k = 1}^\infty {n{a_{nk}}} \left({z^k} -
  1\right)\hspace{0.15in} \mathop = \limits^?
\hspace{0.15in}\sum\limits_{k = 1}^\infty b_{k} (z^{k} - 1), \quad
\mbox{where }\sum\limits_{k = 1}^\infty {{b_k} < + \infty }.\]
This result of Feller's can be viewed as a discrete analogue of the derivation
of L{\'e}vy-Khinchine's formula; see \cite{ito05}, \cite{sato13}, \cite{meerschaert01}
for the general case. When some $\alpha_{nk}$ are negative, it is necessary to find a new method which guarantees that
the extension of Feller's characterization relating to the
$n^{\mathrm{th}}$ convolution of a signed measure is valid.

If it turns out that some $\alpha_{i}$ are negative in the p.g.f.~of
\eqref{eq:feller-CP-pgf}, then the $Y_{i}$ in \eqref{eq:ID} will have
negative probability and \emph{a fortiori} will rule out any chance
for $\varphi (z) = \exp \left\{\sum\limits_{k = 1}^{\infty}
  \alpha_{k}\lambda (z^{k} - 1) \right\}$ to be an infinitely
divisible p.d.f. For instance:
\begin{example}
From the Taylor series expansion it follows that
\[\frac{2}{3} + \frac{1}{3}z = \exp \left\{\ln \tfrac{2}{3} + \sum\limits_{k = 1}^\infty  \frac{(-1)^{k - 1}}{k 2^{k}} z^{k} \right\} = \exp\left\{\ln \tfrac{3}{2}\sum\limits_{k = 1}^\infty  \frac{(-1)^{k - 1}}{k2^{k}\ln \tfrac{3}{2}}(z^{k} - 1) \right\}.\]
Note that $\alpha_{k} = \frac{(-1)^{k - 1}}{k2^{k} \ln
    \tfrac{3}{2}}$ is negative whenever $k$ is even.
\end{example}

\begin{definition}[Discrete pseudo compound Poisson distribution, DPCP] \label{def:DPCP}
  If a discrete r.v.~$X$ with $\Pr(X = i) = P_{i}$, $i \in \Bbb N$, has a
  p.g.f.~of the form
  \begin{equation} \label{eq:DPCP}
    P(z) = \sum\limits_{i = 0}^\infty  P_{i} z^{i} = \exp \left\{\sum\limits_{k = 1}^{\infty} \alpha_{k} \lambda (z^{k} - 1) \right\},
  \end{equation}
  where $\sum\limits_{i = 1}^{\infty} \alpha_{i} = 1$, $\sum \limits_{i =
    1}^{\infty} \left| \alpha_{i} \right| < \infty$, $\alpha_{i} \in
  \Bbb{R}$, and $\lambda > 0$, then $X$ is said to have a
  \emph{discrete pseudo compound Poisson} distribution, abbreviated
  \emph{DPCP}. We denote it as $X \sim \emph{DPCP}({\alpha _1}\lambda,{\alpha _2}\lambda, \cdots )$. The $r$-parameter case: If $X \sim \emph{DPCP}({\alpha _1}\lambda,{\alpha _2}\lambda, \cdots ,{\alpha _r}\lambda)$, we say $X$ has a \emph{DPCP} distribution of order $r$.
\end{definition}

Here is an explicit expression for the probability mass function of
the DPCP distribution: \[P_{0} = \mathrm{e}^{-\lambda},\ P_{1} =
\alpha_{1} \lambda \mathrm{e}^{ - \lambda },\ P_{2} = \left(
  \alpha_{2} \lambda + \frac{1}{2!} \alpha_{1}^{2} \lambda^{2} \right)
\mathrm{e}^{-\lambda},\ P_{3} = \left( \alpha_{3} \lambda + \alpha_{1}
  \alpha_{2} \lambda^{2} + \frac{1}{3!} \alpha_{1}^{3} \lambda^{3}
\right) \mathrm{e}^{ -\lambda }, \ldots \]
\begin{equation} \label{eq:DPCP-explicit}
  P_{n} = \left({\alpha _n}\lambda  +  \cdots  + \sum\limits_{\scriptstyle{k_1} +  \cdots  + {k_u} +  \cdots {k_n} = i,{k_u} \in {\Bbb N }\hfill\atop
    \scriptstyle1\cdot{k_1} +  \cdots  + u{k_u} +  \cdots  + n{k_n} = n\hfill} {\frac{{\alpha _1^{{k_1}}\alpha _2^{{k_2}} \cdots \alpha _n^{{k_n}}{\lambda ^i}}}{{{k_1}!{k_2}! \cdots {k_n}!}}}  +  \cdots  + \frac{{\alpha _1^n{\lambda ^n}}}{{n!}} \right) {\mathrm{e}^{ -\lambda }},
\end{equation}
see \cite{janossy50}, \cite{johnson05} for the discrete compound Poisson case (all $\alpha_{i}$ are non-negative).

\subsection{Characterizations}
It turns out that there already exist a few characterizations of the
DPCP distribution in the literature. Indeed, \cite{levy37} derived the
recurrence relation
\begin{equation}  \label{eq:recursive}
P_{j + 1} = \frac{\lambda}{j + 1} \left[\alpha_{1} P_{j} + 2\alpha_{2} P_{j - 1} +  \cdots  + (j + 1) \alpha_{j + 1} P_{0} \right], \quad {P_0} = \mathrm{e}^{-\lambda},\ j = 1,2, \ldots .
\end{equation}
 of the density of the DPCP distribution when
$\alpha_{i}$ might be negative-valued. If we take $P_{j}$ to be the
empirical probability mass function, then the recursive formula in \eqref{eq:recursive}
can be used to estimate the parameters $\alpha_{j}$, for $j = 1,2,
\ldots$ (see \cite{buchmann03}, p.~1059).

The name ``pseudo compound Poisson'' was introduced by
\cite{hurlimann90}.  For the
general situation, the following L{\'e}vy-Wiener theorem provides us a
shortcut on necessary and sufficient conditions for a distribution to
be DPCP. The proof is non-trivial; see \cite{zygmund02} or
\cite{levy35}. The simple case $H(t) = {t^{ - 1}}$ is due to Wiener.

\begin{lemma}[L{\'e}vy-Wiener theorem] \label{thm:levy-wiener} Let
  $F(\theta) = \sum\limits_{k = -\infty }^{\infty} c_{k}
  \mathrm{e}^{\mathrm{i}k\theta },$ for $\theta \in [0,2\pi]$, be an
  absolutely convergent Fourier series with $\left\| F \right\| =
  \sum\limits_{k = -\infty }^{\infty} \left| c_{k} \right| <
  \infty$. The values of $F(\theta)$ lie on a curve $C$. Let $H(t)$ be
  a holomorphic function (analytic function) of a complex variable $t$
  which is regular at every point of $C$. Then $H[F(\theta)]$ has an
  absolutely convergent Fourier series.
\end{lemma}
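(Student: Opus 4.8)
The plan is to recognise the functions with absolutely convergent Fourier series as a commutative Banach algebra and to produce $H[F(\theta)]$ through the holomorphic functional calculus, bootstrapping from the Wiener case $H(t)=t^{-1}$. Write $A$ for the set of all $f(\theta)=\sum_{k} c_k \mathrm{e}^{\mathrm{i}k\theta}$ with $\|f\|=\sum_k|c_k|<\infty$. The first step is routine: $A$ is a commutative Banach algebra with unit under pointwise multiplication, the only point needing verification being submultiplicativity $\|fg\|\le\|f\|\,\|g\|$, which follows by expressing the Fourier coefficients of $fg$ as the convolution of those of $f$ and $g$ and applying the triangle inequality.

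The second step is the heart of the matter, namely Wiener's theorem itself: if $f\in A$ and $f(\theta)\ne 0$ for every $\theta$, then $1/f\in A$. I would prove this by localization. Near a fixed $\theta_0$ one truncates the Fourier series of $f$ so that the remainder has small $\|\cdot\|$-norm and, with the help of a smooth bump function (which lies in $A$ because its coefficients decay rapidly) supported near $\theta_0$, builds an element $g\in A$ that agrees with $f$ close to $\theta_0$ and satisfies $\|1-g/f(\theta_0)\|<1$; the geometric series $\sum_{n\ge 0}(1-g/f(\theta_0))^n$ then converges in $A$ by submultiplicativity and furnishes a local inverse. A finite partition of unity on the compact circle patches these local inverses into a global $1/f\in A$.

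Given Wiener's theorem, the spectrum of $F$ in $A$ is exactly its range: for $t\in\Bbb{C}$ the element $t-F$ is invertible in $A$ precisely when $t\notin C=F([0,2\pi])$. The third step is the Cauchy-integral construction. Since $H$ is holomorphic on an open neighbourhood of the compact curve $C$, choose a contour $\Gamma$ lying in that neighbourhood and winding once around $C$, and define $H(F):=\frac{1}{2\pi\mathrm{i}}\oint_{\Gamma}H(t)\,(t-F)^{-1}\,\mathrm{d}t$. By the previous step $t\mapsto(t-F)^{-1}$ is a continuous $A$-valued function on $\Gamma$, so its Riemann sums converge in the Banach space $A$; hence $H(F)\in A$.

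Finally I would identify $H(F)$ with $H\circ F$ pointwise. For each fixed $\theta$ the evaluation map $f\mapsto f(\theta)$ is a bounded algebra homomorphism on $A$, so it commutes with the $A$-valued integral and with inversion, giving $H(F)(\theta)=\frac{1}{2\pi\mathrm{i}}\oint_{\Gamma}H(t)\,(t-F(\theta))^{-1}\,\mathrm{d}t=H(F(\theta))$ by the ordinary Cauchy integral formula. Thus $H[F(\theta)]$ lies in $A$, that is, it has an absolutely convergent Fourier series. The main obstacle is unquestionably the second step: the passage from ``no zeros'' to ``invertible in $A$'' is Wiener's deep lemma (equivalently, the identification of the maximal ideal space of $A$ with the circle), and once it is secured the extension to an arbitrary holomorphic $H$ is soft.
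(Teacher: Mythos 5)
Your proposal is essentially correct, but there is nothing in the paper to compare it against: the authors state this lemma as a known result, remark that ``the proof is non-trivial,'' and refer the reader to Zygmund's \emph{Trigonometric Series} and to L\'evy's 1935 paper, noting only that the case $H(t)=t^{-1}$ is Wiener's theorem. So the useful comparison is between your route and the classical one in the cited sources. You take the standard modern path: the Wiener algebra $A$ is a unital commutative Banach algebra, Wiener's theorem shows that the spectrum of $F$ in $A$ equals its range $C$, and the Riesz--Dunford integral $\frac{1}{2\pi\mathrm{i}}\oint_{\Gamma}H(t)(t-F)^{-1}\,\mathrm{d}t$ then lands in $A$ and is identified with $H\circ F$ via the evaluation homomorphisms. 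This is sound; the only points to tidy are that for a general compact range $C$ you may need a cycle (a finite union of contours, with winding number $1$ about $C$ and $0$ about the complement of the domain of $H$) rather than a single contour, and that your localization proof of Wiener's theorem silently uses two nontrivial estimates: that a bump function of small support has $A$-norm bounded independently of the support, and that $\phi\cdot(f-f(\theta_0))$ is small in the $A$-norm, not merely in sup-norm, which requires a Bernstein-type inequality on the trigonometric-polynomial part. The classical proofs of L\'evy and Zygmund avoid the functional calculus altogether: they localize directly on $H\circ F$, expanding $H$ in a power series about each value $F(\theta_0)$, summing the series in $A$ by the same smallness-of-the-localized-tail estimate, and patching with a partition of unity. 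Your version is cleaner globally (holomorphy on any neighbourhood of the range suffices, with invertibility handled once and for all by the algebra structure) at the price of the Banach-algebra machinery; the classical version is more elementary but runs the compactness and partition-of-unity bookkeeping twice, once for $H(t)=t^{-1}$ and once for general $H$.
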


The next two corollaries are direct consequences of the
L{\'e}vy-Wiener theorem.

\begin{corollary} \label{cor:levy-wiener1}
  For each $k \in \mathbb{N}$, the $\frac{1}{k}^{\mathrm{th}}$ power of the
  p.g.f.~$G(z) = \sum\limits_{i = 0}^{\infty} p_{i}z^{i}$ is
  \[\sqrt[k]{G(z)} = \sum\limits_{i = 0}^{\infty} q_{i}^{(k)}z^{i},\quad \left| z \right| \le 1.\]
  If $G(z)$ has no zero, then $\sqrt[k]{{G(z)}}$ is absolutely
  convergent, namely, $\sum\limits_{i = 0}^{\infty} \left| q_{i}^{(k)}
  \right| < \infty$.
\end{corollary}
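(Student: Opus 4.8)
The plan is to apply the L\'evy--Wiener theorem (Lemma~\ref{thm:levy-wiener}) with the choice $H(t) = t^{1/k}$ and with $F$ equal to the characteristic function attached to $G$. First I would set $F(\theta) = G(\mathrm{e}^{\mathrm{i}\theta}) = \sum_{i=0}^{\infty} p_{i}\,\mathrm{e}^{\mathrm{i}i\theta}$ for $\theta \in [0,2\pi]$. Because $G$ is a genuine p.g.f.~we have $\sum_{i=0}^{\infty} |p_{i}| = \sum_{i=0}^{\infty} p_{i} = G(1) = 1 < \infty$, so $F$ is an absolutely convergent Fourier series with $c_{i} = p_{i}$ for $i \geqslant 0$ and $c_{i} = 0$ for $i < 0$, which is exactly the input required by the lemma. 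As $\theta$ runs over $[0,2\pi]$, the values $F(\theta)$ trace out a closed curve $C$ in $\mathbb{C}$.

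The delicate point is to check that $H(t) = t^{1/k}$ can be realized as a genuine single-valued holomorphic function regular at every point of $C$. The no-zero hypothesis enters here twice. Since $G$ has no zero on the closed unit disk, $F(\theta) = G(\mathrm{e}^{\mathrm{i}\theta}) \neq 0$ for every $\theta$, so $C$ avoids the origin, the only branch point of the fractional power. This alone is insufficient, because $t^{1/k}$ is multivalued; I must also rule out that $C$ winds around $0$. For this I would invoke the argument principle: the winding number of $C$ about the origin equals the number of zeros of $G$ inside the open unit disk, counted with multiplicity, which is $0$ under our hypothesis. Hence $C$ is null-homotopic in $\mathbb{C}\setminus\{0\}$, and a single-valued branch of $t^{1/k}$ is holomorphic on a neighborhood of $C$.

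With both facts in hand the L\'evy--Wiener theorem applies directly and gives that $H[F(\theta)] = [G(\mathrm{e}^{\mathrm{i}\theta})]^{1/k} = \sqrt[k]{G(\mathrm{e}^{\mathrm{i}\theta})}$ has an absolutely convergent Fourier series. It remains to identify the coefficients. Since $G$ is analytic and zero-free on the simply connected open unit disk, the chosen branch $\sqrt[k]{G(z)}$ is itself analytic there and expands as a power series $\sum_{i=0}^{\infty} q_{i}^{(k)} z^{i}$; evaluating at $z = \mathrm{e}^{\mathrm{i}\theta}$ shows that the Fourier coefficients of $H[F]$ are precisely the $q_{i}^{(k)}$, with no negative-frequency terms. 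The absolute convergence supplied by the lemma therefore reads $\sum_{i=0}^{\infty} |q_{i}^{(k)}| < \infty$, which is the assertion.

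I expect the main obstacle to be the branch/winding-number step in the second paragraph: one must verify that the no-zero hypothesis not only keeps $C$ off the origin but also forces winding number zero, so that $t^{1/k}$ is honestly single-valued along $C$ and the hypotheses of Lemma~\ref{thm:levy-wiener} are genuinely met. The passage from ``absolutely convergent Fourier series'' to ``absolutely summable power-series coefficients'' is then routine given the analyticity of $\sqrt[k]{G(z)}$ in the disk.
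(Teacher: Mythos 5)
Your proposal is correct and follows exactly the route the paper intends: the paper offers no proof beyond the remark that the corollary is a ``direct consequence'' of the L\'evy--Wiener theorem, and your argument is precisely that application with $F(\theta)=G(\mathrm{e}^{\mathrm{i}\theta})$ and $H(t)=t^{1/k}$. The winding-number/branch-selection step you flag is indeed the only nontrivial detail, and your treatment of it (argument principle plus identification of the Fourier coefficients with the Taylor coefficients of the branch analytic in the disk) correctly fills in what the paper leaves implicit.
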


When $p_{0} \ge p_{1} \ge p_{2} \ge \ldots \ge 0$ as in Corollary
\ref{cor:levy-wiener1}, this is Sz\'{e}kely's Discrete Convex Theorem
(see Theorem 2.3.1 of \cite{kerns04}, p.~29).

\begin{corollary} \label{cor:levy-wiener2} Let $f(\theta ) =
  \sum\limits_{j = 0}^{\infty} a_{j} \mathrm{e}^{\mathrm{i}j\theta}$
  and $\left\| f \right\| = \sum\limits_{j = 0}^{\infty} \left| a_{j}
  \right| < \infty $. If $f(\theta )$ has no zero, then the $\left\| f^{n}
  \right\|$ are bounded.
\end{corollary}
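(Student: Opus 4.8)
The plan is to read the statement as the instance $H(t)=\log t$ of the L\'evy--Wiener theorem (Lemma~\ref{thm:levy-wiener}) and then collapse the integer powers of $f$ into a single exponential whose Wiener norm I can control. Since $\left\|f\right\|<\infty$, the function $f$ is continuous on the compact circle, and ``no zero'' means its range $C=f([0,2\pi])$ is a compact subset of $\mathbb{C}\setminus\{0\}$, so $\left|f(\theta)\right|\ge\delta>0$ for some $\delta$. On a simply connected open neighbourhood of $C$ omitting the origin I would fix a holomorphic branch of $H(t)=\log t$. Writing $f(\theta)=g(\mathrm{e}^{\mathrm{i}\theta})$ with $g(z)=\sum_{j\ge0}a_{j}z^{j}$ analytic for $\left|z\right|<1$ and continuous on $\left|z\right|\le1$, such a branch is single-valued exactly when the winding number of $C$ about $0$ vanishes; so the first step is to use the argument principle to reduce to the case where $g$ is zero-free on $\left|z\right|\le1$, making $\log f$ a genuine one-sided series. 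L\'evy--Wiener then delivers $u:=\log f\in A(\mathbb{T})$ with $\left\|u\right\|=:L<\infty$, where $A(\mathbb{T})$ denotes the Wiener algebra of absolutely convergent Fourier series.

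Next I would write $f^{n}=\exp(nu)$ and use submultiplicativity of the Wiener norm, $\left\|vw\right\|\le\left\|v\right\|\,\left\|w\right\|$, to expand the exponential series termwise:
\[
  \left\|f^{n}\right\|=\Big\|\sum_{m\ge0}\frac{(nu)^{m}}{m!}\Big\|\le\sum_{m\ge0}\frac{n^{m}L^{m}}{m!}=\mathrm{e}^{nL}.
\]
This already recovers the qualitative content of L\'evy--Wiener applied with $H(t)=t^{n}$, namely that each $\left\|f^{n}\right\|$ is finite, but the crude factor $\mathrm{e}^{nL}$ is far too large to yield the uniform bound the corollary asserts.

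The hard part — and the step I expect to be the main obstacle — is removing the dependence on $n$. The natural mechanism is power-boundedness in the Banach algebra $A(\mathbb{T})$: its maximal ideal space is the circle and the Gelfand transform of $f$ is $f$ itself, so the spectral radius is $\lim_{n}\left\|f^{n}\right\|^{1/n}=\max_{\theta}\left|f(\theta)\right|$. Consequently uniform boundedness forces $\max_{\theta}\left|f(\theta)\right|\le1$, and securing this from the hypotheses — together with the far subtler boundary regime $\max_{\theta}\left|f(\theta)\right|=1$ — is the essential difficulty, precisely the point at which the one-sided (analytic, Hardy-space) structure of $f$ and the exact force of the ``no zero'' assumption must be pushed hardest. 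Concretely I would split $u=c+v$ into its mean $c=\frac{1}{2\pi}\int_{0}^{2\pi}\log f\,\mathrm{d}\theta$ and mean-zero remainder $v$, note that $\mathrm{Re}\,c=\frac{1}{2\pi}\int_{0}^{2\pi}\log\left|f\right|\,\mathrm{d}\theta$ fixes the exponential scale, and then attempt to bound the Fourier $\ell^{1}$-norm of $\exp(nv)$ uniformly in $n$ by exploiting that the positive-frequency contributions of an analytic symbol do not inflate the norm. This last uniform estimate is exactly the content of Beurling's problem quoted above, and carrying it through is where essentially all of the work lies; the L\'evy--Wiener reduction of the first two steps is only the scaffolding around it.
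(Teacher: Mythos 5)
There is a genuine gap, and it begins with how you read the conclusion. The paper treats Corollary~\ref{cor:levy-wiener2} as a one-line ``direct consequence'' of Lemma~\ref{thm:levy-wiener}: for each fixed $n$ the function $H(t)=t^{n}$ is entire, so $f^{n}$ has an absolutely convergent Fourier series, i.e.\ each $\left\| f^{n} \right\|$ is finite (equivalently, $\left\| f^{n}\right\| \le \left\| f \right\|^{n}$ by submultiplicativity); ``bounded'' is used in this weak, pointwise-in-$n$ sense, parallel to Corollary~\ref{cor:levy-wiener1} where the no-zero hypothesis actually bites (for roots and inverses, $H(t)=t^{1/k}$ or $t^{-1}$ must be regular on the range of $f$). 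You instead read the conclusion as uniform boundedness of the sequence $\left(\left\| f^{n}\right\|\right)_{n}$, and your own spectral-radius remark already shows why no proof of that can exist from these hypotheses: uniform boundedness would force $\max_{\theta}\left| f(\theta)\right| \le 1$, which nothing in the statement guarantees. Concretely, $f(\theta)=\tfrac{3}{2}-\tfrac{1}{2}\mathrm{e}^{\mathrm{i}\theta}$ satisfies $\left| f(\theta)\right| \ge 1$ (no zero), $\sum_{j} a_{j}=1$, $\left\| f \right\| = 2 < \infty$, yet
\[
\left\| f^{n} \right\| \;=\; \sum_{k=0}^{n} \binom{n}{k} \left(\tfrac{3}{2}\right)^{n-k} \left(\tfrac{1}{2}\right)^{k} \;=\; 2^{n} \longrightarrow \infty .
\]
So the ``essential difficulty'' you identify (the uniform estimate, i.e.\ Beurling's Problem~2) is not a missing step of this corollary --- it is a different and genuinely open question --- and since your write-up ends by declaring exactly that step unresolved (``where essentially all of the work lies''), the proposal proves nothing under your interpretation, while the statement the paper intends requires none of your machinery.

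A second, independent flaw is your preliminary reduction. The hypothesis excludes zeros of $f$ on the circle only, not zeros of $g(z)=\sum_{j\ge 0} a_{j}z^{j}$ inside the disk, and you cannot ``use the argument principle to reduce to the case where $g$ is zero-free on $\left| z \right| \le 1$'': the paper's own example $g(z)=\tfrac{1}{3}+\tfrac{2}{3}z$ (preceding Remark~3) vanishes at $z=-\tfrac{1}{2}$, the winding number of $f$ about $0$ is then $1$, and no single-valued branch of $\log f$ lies in the Wiener algebra --- indeed the paper emphasizes that this $f$ is signed integer-valued ID but \emph{not} signed discrete ID, precisely because $\ln G$ fails as a one-sided series. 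Factoring out interior zeros replaces $f$ by a polynomial multiple whose own powers have growing norms, so the reduction does not return you to the original sequence $\left\| f^{n} \right\|$; your estimate $\left\| f^{n}\right\| \le \mathrm{e}^{nL}$ is therefore only available when the winding number happens to vanish, and even then it is weaker than the trivial bound $\left\| f\right\|^{n}$ whenever $L \ge \ln \left\| f \right\|$.
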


Next, we give a lemma about the non-vanishing p.g.f. characterization of DPCP, see \cite{zhang2014}. We restate the proof here.

\begin{lemma}[Non-zero p.g.f. of DPCP] \label{lem:char-DPCP}
  For any discrete r.v.~$X$, its p.g.f.~$G(z)$ has no zeros if and
  only if $X$ is DPCP distributed.
\end{lemma}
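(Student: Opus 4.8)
The plan is to establish the two implications separately, handling the direction ``DPCP $\Rightarrow$ non-vanishing'' in a line and devoting the real work to its converse. Throughout I read ``$G(z)$ has no zeros'' as non-vanishing on the closed unit disk $\{|z|\le 1\}\subset\Bbb{C}$, since that is what makes Lemma \ref{thm:levy-wiener} applicable. For the easy direction, if $X\sim\mathrm{DPCP}$ then $G(z)=\exp\{\lambda\sum_{k\ge1}\alpha_k(z^k-1)\}$; on $|z|\le1$ the exponent converges absolutely because $|z^k-1|\le 2$ and $\sum_k|\alpha_k|<\infty$, so it is a finite number, and as $\exp$ never vanishes we get $G(z)\ne0$.

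For the converse, assume $G(z)=\sum_{i\ge0}p_iz^i$ does not vanish on $|z|\le1$. I would first record the two structural facts this buys: $G(0)=p_0\ne0$, and on the (simply connected) open disk $G$ is analytic and non-vanishing, so a single-valued holomorphic branch of $\ln G(z)$ exists there with a Taylor expansion $\ln G(z)=\sum_{k\ge0}c_kz^k$. I would then pass to the boundary circle and apply Lemma \ref{thm:levy-wiener} with $H(t)=\ln t$: the function $F(\theta)=G(\mathrm{e}^{\mathrm{i}\theta})$ is an absolutely convergent Fourier series whose values trace a curve $C$ that avoids the origin (precisely because $G$ has no zeros), so $\ln t$ is regular at every point of $C$, and the theorem returns $\ln G(\mathrm{e}^{\mathrm{i}\theta})=\sum_{k=-\infty}^{\infty}d_k\mathrm{e}^{\mathrm{i}k\theta}$ with $\sum_k|d_k|<\infty$.

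The crux is to discard the negative Fourier modes so that the boundary series matches the interior Taylor series. Writing $d_{-m}=\frac{1}{2\pi\mathrm{i}}\oint_{|z|=1}\ln G(z)\,z^{m-1}\,\mathrm{d}z$ and noting that $\ln G(z)\,z^{m-1}$ is holomorphic inside the disk for every $m\ge1$, Cauchy's theorem forces $d_{-m}=0$; hence $c_k=d_k$ for $k\ge0$ and in particular $\sum_{k\ge0}|c_k|<\infty$. To finish, I would normalize using $G(1)=\sum_ip_i=1$: evaluating $\ln G$ at $z=1$ gives $0=c_0+\sum_{k\ge1}c_k$, whence $\ln G(z)=\sum_{k\ge1}c_k(z^k-1)$. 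Setting $\lambda=-c_0=\ln(1/p_0)>0$ (strictly positive since $0<p_0<1$, the point mass $p_0=1$ being a degenerate case to set aside) and $\alpha_k=c_k/\lambda$ yields $\sum_{k\ge1}\alpha_k=1$ and $\sum_{k\ge1}|\alpha_k|<\infty$, which is exactly the DPCP form \eqref{eq:DPCP}.

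The main obstacle is the middle step: verifying the hypotheses of the L{\'e}vy--Wiener theorem (regularity of $\ln$ along $C$, which is where non-vanishing is essential) and, above all, proving that the negative Fourier coefficients vanish. That vanishing is what upgrades ``absolutely convergent Fourier series on the circle'' to ``absolutely convergent power series in $z$ alone,'' and it is the bridge that turns an analytic-function statement into a statement about the one-sided DPCP exponent. Checking $\lambda>0$ and excluding the degenerate atom at $0$ are minor but necessary points of care.
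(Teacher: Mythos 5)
Your proof is correct and follows essentially the same route as the paper's: both directions hinge on applying the L\'evy--Wiener theorem with $H=\ln$ to the non-vanishing boundary function $G(\mathrm{e}^{\mathrm{i}\theta})$. In fact you supply several steps the paper leaves implicit --- the vanishing of the negative Fourier modes (which you obtain from Cauchy's theorem, where the paper only gestures at considering all radii $r\in[0,1]$), the normalization $\lambda=-\ln p_0>0$, $\alpha_k=c_k/\lambda$, and the caveat that the degenerate atom at $0$ (where $p_0=1$, so $\lambda=0$) must be excluded.
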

\begin{proof}
  It is easy to see that the p.g.f.~of a DPCP distribution has no
  zero. On the other hand, if $G(z)$ has no zeros, taking $z$ as a
  complex number, let $z = r \mathrm{e}^{\mathrm{i}\theta}$, for $1
  \geqslant r \geqslant 0$. We have $G(r \mathrm{e}^{\mathrm{i}\theta
  }) = \sum\limits_{k = 0}^{\infty} p_{k} r^{k}
  \mathrm{e}^{\mathrm{i}k\theta}$, thus $\sum\limits_{k = 0}^{\infty}
  \left| p_{k} r^{k} \right| \leqslant \sum\limits_{k = 0}^{\infty}
  \left| p_{k} \right| = 1$. By applying the L{\'e}vy-Wiener theorem
  for all $r \in [0,1]$, $\ln G(\mathrm{e}^{\mathrm{i}\theta})$ has
  an absolutely convergent Fourier series. Therefore, $X$ is DPCP
  distributed from Definition \ref{def:DPCP}.
\end{proof}

For instance,
$P(z) = \frac{1}{3} + \frac{2}{3}z$ on $\left| z \right| \leqslant 1$
has no pseudo compound Poisson representation since
$\frac{1}{3} + \frac{2}{3}z = 0$ for $z = -\frac{1}{2}$.

Next, we define signed discrete infinite divisibility (ID) as an extension
of discrete infinite divisibility. Firstly, we show that p.g.f. of a signed discrete infinitely divisible distribution never vanishes. Secondly, we obtain an
extension of Feller's characterization by employing the
L{\'e}vy-Wiener theorem.

\begin{definition} \label{def:quasi-ID}
  A p.g.f.~is said to be \emph{signed discrete infinitely divisible} if for
  every $n \in \mathbb{N}$, $G(z)$ is the $n^{\mathrm{th}}$-power of some
  p.g.f.~with signed probability density, namely
  \[G(z) = [G_{n}(z)]^{n} = \left[ \sum \limits_{i = 0}^{\infty}
    p_{i}^{(n)}z^{i} \right]^{n}, \quad \sum\limits_{i = 0}^{\infty}
  p_{i}^{(n)} = 1,\ \sum\limits_{i = 0}^{\infty} \left| p_{i}^{(n)}
  \right| < \infty, \ p_i^{(n)} \in \Bbb{R}.\]
\end{definition}

The notion of signed discrete infinite divisibility first appeared in
\cite{szekely05} where he discusses the conditions under which
$\sqrt[k]{{G(z)}}$ is absolutely convergent in the special case that
$G(z)$ is the p.g.f.~of a Bernoulli distribution.

To get a characterization for signed discrete ID distributions, we need
Prohorov's theorem for signed measures, see p.~202 of
\cite{bogachev07}. Applying Prohorov's theorem for bounded and
uniformly tight signed measures generalises the continuity theorem for
signed p.g.f.'s.

\begin{lemma}[Prohorov's theorem for signed measures,
  \cite{bogachev07}] \label{lem:prohorov} Let $(E,\tau )$ be a
  complete separable metric space and let ${\cal M}$ be a family of
  signed Borel measures on $E$. Then the following conditions are
  equivalent:
  \begin{enumerate}[label={\upshape(\roman*)}]
  \item Every sequence $\mu_{n} \in {\cal M}$ contains a weakly
    convergent subsequence.
  \item The family ${\cal M}$ is uniformly tight and bounded in the
    variation norm (a signed measure ${\mu}$ in a topological space
    $(E,\tau)$ is called \emph{uniformly tight} if for every
    $\varepsilon > 0$ there exists a compact set $K_{\varepsilon}$
    such that $\left| \mu \right|(E\backslash {K_\varepsilon }) <
    \varepsilon $ for all $\mu \in {\cal M}$).
  \end{enumerate}
\end{lemma}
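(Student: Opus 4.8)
The plan is to reduce everything to the classical Prohorov theorem for finite positive measures by way of the Jordan--Hahn decomposition, proving the two implications separately. Throughout, write each signed measure as $\mu = \mu^{+} - \mu^{-}$ with $\mu^{+},\mu^{-}$ its positive and negative parts, so that $|\mu| = \mu^{+} + \mu^{-}$ and, as set functions, $\mu^{\pm} \leqslant |\mu|$. The starting observation is that this domination passes both the variation bound and the uniform tightness down to the positive and negative parts: if $|\mu|(E) \leqslant M$ and $|\mu|(E \setminus K_{\varepsilon}) < \varepsilon$ for all $\mu \in \mathcal{M}$, then the same bounds hold for each of the families $\{\mu^{+}\}$ and $\{\mu^{-}\}$.

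For the implication (ii) $\Rightarrow$ (i), I would take an arbitrary sequence $\mu_{n} \in \mathcal{M}$ and consider the two families of positive measures $\{\mu_{n}^{+}\}$ and $\{\mu_{n}^{-}\}$, each uniformly bounded by $M$ and each uniformly tight by the remark above. The classical Prohorov theorem, after normalising by the total masses $\mu_{n}^{\pm}(E) \in [0,M]$ (which themselves have convergent subsequences in the compact interval $[0,M]$), yields a subsequence along which $\mu_{n}^{+}$ converges weakly to some finite positive measure $\nu^{+}$; passing to a further subsequence by a diagonal argument makes $\mu_{n}^{-}$ converge weakly to some $\nu^{-}$ simultaneously. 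Since weak convergence is tested against bounded continuous functions and integration is linear, $\mu_{n} = \mu_{n}^{+} - \mu_{n}^{-}$ then converges weakly to $\nu := \nu^{+} - \nu^{-}$, a finite signed measure, which is the required weakly convergent subsequence.

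For the converse (i) $\Rightarrow$ (ii), boundedness in the variation norm should come from the uniform boundedness principle: for each fixed $f \in C_{b}(E)$ the scalars $\int f \, \mathrm{d}\mu$ are bounded over $\mu \in \mathcal{M}$ (otherwise a sequence with $|\int f \, \mathrm{d}\mu_{n}| \to \infty$ would admit no weak limit, contradicting (i)), so the family is weakly bounded and hence bounded as functionals on $C_{b}(E)$, which for a Polish space is exactly the variation bound. Uniform tightness would be argued by contradiction: if it failed there would be an $\varepsilon > 0$ escaping every compact set, and one would construct a sequence whose mass leaks to infinity in a way that no subsequence can weakly capture, again contradicting (i).

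The routine part is the reduction via Jordan--Hahn together with the diagonal extraction; the genuinely delicate step is the uniform-tightness half of the forward implication, where on a general Polish space one must manufacture, from a failure of tightness, a concrete sequence of signed measures that provably has no weakly convergent subsequence. This is where the completeness and separability of $(E,\tau)$ are essential and where I expect the main technical effort to lie.
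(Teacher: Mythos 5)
First, a point of reference: the paper does not prove this lemma at all --- it is imported verbatim from Bogachev (2007), Vol.~II, p.~202 (Theorem 8.6.2 there) and used as a black box, so there is no in-paper argument to compare yours against. Judged on its own terms, your proposal is sound and essentially complete for the direction (ii) $\Rightarrow$ (i) (Jordan--Hahn decomposition, domination of $\mu^{\pm}$ by $|\mu|$, classical Prohorov applied to the two positive families, diagonal extraction, linearity of weak convergence) and for the norm-boundedness half of (i) $\Rightarrow$ (ii) (pointwise boundedness of $\mu \mapsto \int f\,\mathrm{d}\mu$ from sequential weak compactness, then Banach--Steinhaus on $C_{b}(E)$, using that the dual norm of a finite Borel measure on a metric space equals its variation norm via closed regularity and Urysohn functions).

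The genuine gap is exactly where you flag it: the uniform-tightness half of (i) $\Rightarrow$ (ii) is not an argument but a promissory note. Saying that a failure of tightness lets one ``construct a sequence whose mass leaks to infinity in a way that no subsequence can weakly capture'' begs the question for signed measures, because tightness is measured in the total variation $|\mu|$ while weak convergence only sees $\mu$ itself: variation mass can escape to infinity in mutually cancelling positive and negative packets (think of $\delta_{a_{n}}-\delta_{b_{n}}$ with $a_{n},b_{n}\to\infty$ and $d(a_{n},b_{n})\to 0$), and ruling out that such cancellation survives against \emph{every} $f \in C_{b}(E)$ along \emph{every} subsequence is precisely the substantive content of the converse Prohorov theorem for signed measures. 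This is where completeness and separability enter in an essential way (via a Dieudonn\'{e}--Grothendieck-type argument, or Bogachev's reduction using countable unions of balls of radius $1/k$), and it cannot be dispatched by the positive-measure intuition; note also that you cannot simply apply the converse positive Prohorov theorem to $\{|\mu_{n}|\}$ or $\{\mu_{n}^{\pm}\}$, since hypothesis (i) gives weak subsequential convergence only of the signed measures $\mu_{n}$, not of their variations. Until that step is supplied, the equivalence is only proved in one direction.
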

Let $G$ be as in Definition \ref{def:quasi-ID} and take $E=\Bbb N$ and
${K_{\varepsilon M}} = \{ 0,1,\ldots, M - 1, M\}$ in Lemma
\ref{lem:prohorov}. For every $n \in \mathbb{N}$, $\sum\limits_{i = 0}^{\infty} \left|
  p_{i}^{(n)} \right| < \infty$, there exists a compact set
$K_{\varepsilon M}$ such that $\sum\limits_{i = M+1}^{\infty} \left|
  p_{i}^{(n)} \right| < \varepsilon $ for every $\varepsilon > 0$, namely $\mathop {\lim }\limits_{M \to \infty } \mathop {\sup }\limits_n \sum\limits_{i = M + 1}^\infty  {\left| {p_i^{(n)}} \right|}  = 0$ (see p.~42 in \cite{sato13} for the case of positive measure). So
$\{p_{i}^{(n)}\}_{i = 0}^{\infty}$ is a sequence of uniformly tight
signed bounded point measures.

The next lemma is an extension of the continuity theorem for
p.g.f.'s. With slight modifications, the necessity part directly
follows the proof of the p.g.f.~case; see \cite{feller68} p.~280.

\begin{lemma}[Continuity theorem for signed p.g.f.'s]
  Let ${G_n}(z) = \sum\limits_{k = 0}^\infty {p_k^{(n)}} {z^k}$ on $\left|
    z \right| \le 1$ be p.g.f.'s for a sequence of bounded signed
  point measures $\left\{ p_k^{(n)} \right\}_{k = 0}^{\infty}$ with
  $\sum\limits_{k = 0}^\infty {\left| {p_k^{(n)}} \right|} < \infty
  $. Then there exists a sequence $p_{k}$ such that
  \begin{equation} \label{eq:cont-thm-signed1}
    \mathop {\lim }\limits_{n \to \infty } p_k^{(n)} = {p_k},
  \end{equation}
  for $k = 0,1, \ldots$, if and only if the limit
  \begin{equation} \label{eq:cont-thm-signed2}
    \mathop {\lim }\limits_{n \to \infty } {G_n}(z) = \mathop {\lim }\limits_{n \to \infty } \sum\limits_{k = 0}^\infty  {p_k^{(n)}} {z^k} \buildrel \Delta \over = G(z)
  \end{equation}
  exists for each $z$ in the open interval $(0,1)$. Furthermore, let
  $G(z) = \sum\limits_{k = 0}^\infty {{p_k}{z^k}} $. Then
  $\sum\limits_{k = 0}^\infty {{p_k} = 1} $ if and only if $\mathop
  {\lim }\limits_{z \nearrow 1} G(z) = 1$.
\end{lemma}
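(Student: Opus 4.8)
The plan is to prove the main equivalence and the final (Abel-type) clause separately, and within the equivalence to split off the easy \emph{necessity} direction \eqref{eq:cont-thm-signed1}$\,\Rightarrow\,$\eqref{eq:cont-thm-signed2} from the genuinely harder \emph{sufficiency} direction \eqref{eq:cont-thm-signed2}$\,\Rightarrow\,$\eqref{eq:cont-thm-signed1}. Throughout I use two facts already in hand: the family $\{p_k^{(n)}\}$ is uniformly tight, i.e.\ $\lim_{M\to\infty}\sup_n\sum_{k=M+1}^\infty|p_k^{(n)}|=0$, and it is uniformly bounded in the variation norm, $C:=\sup_n\sum_{k=0}^\infty|p_k^{(n)}|<\infty$. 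I identify each coefficient array $\{p_k^{(n)}\}_k$ with a signed point measure $\mu_n$ on $\mathbb{N}$, so that $G_n(z)=\int_{\mathbb{N}} z^x\,\mu_n(\mathrm{d}x)$.

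For necessity, fix $z\in(0,1)$ and $\varepsilon>0$. Uniform tightness gives an $M$ with $\sup_n\sum_{k>M}|p_k^{(n)}|<\varepsilon$; since $0\le z^k\le 1$, the tails satisfy $|\sum_{k>M}p_k^{(n)}z^k|<\varepsilon$ uniformly in $n$, while the head $\sum_{k\le M}p_k^{(n)}z^k$ converges as $n\to\infty$ because it is a finite sum of terms each converging by \eqref{eq:cont-thm-signed1}. Hence $(G_n(z))_n$ is Cauchy and converges; letting $n\to\infty$ in the head bound and in $\sum_{k\le M}p_k^{(n)}z^k$ identifies the limit as $G(z)=\sum_k p_k z^k$. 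This is exactly the computation behind Feller's p.g.f.\ case, now carried out with absolute values throughout.

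The sufficiency direction is the main obstacle: signedness destroys the monotonicity and positivity that drive Feller's Helly-type selection, so weak compactness must instead be supplied by Prohorov's theorem for signed measures. By Lemma~\ref{lem:prohorov}, uniform tightness together with the variation bound $C<\infty$ implies that every subsequence of $(\mu_n)$ has a weakly convergent sub-subsequence. On the discrete space $\mathbb{N}$ every indicator $\mathbf{1}_{\{k\}}$ and every map $x\mapsto z^x$ with $z\in(0,1)$ is bounded and continuous, so along any weak limit $\mu_{n_j}\rightharpoonup\mu$ one reads off both $p_k^{(n_j)}\to\mu(\{k\})$ and $G_{n_j}(z)\to\sum_k\mu(\{k\})z^k$. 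By hypothesis \eqref{eq:cont-thm-signed2} the latter must equal $G(z)$ for every $z\in(0,1)$; hence any two subsequential limit measures $\mu,\mu'$ give $\sum_k(\mu(\{k\})-\mu'(\{k\}))z^k=0$ on $(0,1)$, and the identity theorem for power series forces $\mu(\{k\})=\mu'(\{k\})$ for all $k$. Thus all subsequential limits share one coefficient array $p_k$, and a routine subsequence-of-a-subsequence argument upgrades this to $p_k^{(n)}\to p_k$ for the whole sequence, which is \eqref{eq:cont-thm-signed1}. The delicate points to verify carefully are that weak convergence of \emph{signed} point measures on $\mathbb{N}$ really is equivalent to coefficientwise convergence and that it transports through the bounded continuous test function $x\mapsto z^x$.

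For the final clause, note first that $\sum_{k\le M}|p_k|=\lim_n\sum_{k\le M}|p_k^{(n)}|\le C$ for every $M$, so $\sum_k|p_k|\le C<\infty$. Consequently $G(z)=\sum_k p_k z^k$ converges uniformly on $[0,1]$ by the Weierstrass $M$-test, so $G$ is continuous there and $\lim_{z\nearrow 1}G(z)=G(1)=\sum_k p_k$. Therefore $\sum_k p_k=1$ if and only if $\lim_{z\nearrow 1}G(z)=1$, completing the proof.
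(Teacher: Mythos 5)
Your proof is correct, and for the main equivalence it follows essentially the same route as the paper: the necessity direction is the Feller-type tail estimate (you control the tail via uniform tightness of $\sum_{k>M}|p_k^{(n)}|$, while the paper uses a uniform bound on the individual coefficients together with the geometric tail $\sum_{k>K}z^k$ --- both work), and the sufficiency direction is exactly the paper's combination of Prohorov's theorem for signed measures (Lemma~\ref{lem:prohorov}), the identity theorem for power series to show all subsequential limits coincide, and the subsequence-of-a-subsequence upgrade to convergence of the whole sequence; your write-up of that step is in fact cleaner than the paper's somewhat garbled contradiction sentence. Where you genuinely diverge is the final Abel-type clause: the paper asserts $\lim_{z\nearrow 1}\lim_{n}G_n(z)=\lim_{n}\lim_{z\nearrow 1}G_n(z)$ without justifying the interchange of limits, whereas you first deduce $\sum_k|p_k|\le C<\infty$ from the uniform variation bound and then obtain $\lim_{z\nearrow 1}G(z)=\sum_k p_k$ from uniform convergence on $[0,1]$. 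Your version buys rigor --- the interchange in the paper is precisely the kind of step that fails without a uniformity hypothesis, and the variation bound is what supplies it --- at the cost of rendering the ``if and only if'' in the last sentence a trivial identity rather than a genuine equivalence, which is fine since that is what the hypotheses actually deliver.
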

\begin{proof}
  Necessity: We suppose \eqref{eq:cont-thm-signed1} holds and define
  $G(z)$ by \eqref{eq:cont-thm-signed2}. If ${p_k^{(n)}}$ is a bounded
  and tight signed measure, then there exists an $M$ such that $\max
  \left\{ \left| p_{k} \right|,\left| {p_k^{(n)}} \right| \right\} \le
  M$ for $k = K+1,K+2, \ldots $. When $0 < z < 1$, it follows that
  \[\left| {{G_n}(z) - G(z)} \right| = \left| {\sum\limits_{k = 0}^\infty  {(p_k^{(n)} - {p_k})} {z^k}} \right| \le \left| {\sum\limits_{k = 0}^K {(p_k^{(n)} - {p_k})} } \right| + 2M\left| {\sum\limits_{k = K + 1}^\infty  {{z^k}} } \right|.\]
  Set $\varepsilon > 0$. Fix $K$ such that $\left| {\sum\limits_{k = K
        + 1}^\infty {{z^k}} } \right| = \frac{{{z^K}}}{{1 - z}} \le
  \varepsilon/2M $, and choose $N$ sufficiently large such that $\left|
    {\sum\limits_{k = 0}^K {(p_k^{(n)} - {p_k})} } \right| \le
  \varepsilon/2$ for all $n \ge N$. Then $\left| {{G_n}(z) - G(z)}
  \right| \le \varepsilon $ for $n \ge N$. Hence
  \eqref{eq:cont-thm-signed2} is true.

  Sufficiency: Assuming \eqref{eq:cont-thm-signed2} is true for
  $0 < z < 1$, then there is a subsequence $\{ {n^{(1)}}\} $ so that
  $\mathop {\lim }\limits_{{n^{(1)}} \to \infty } {G_{{n^{(1)}}}}(z) =
  G(z)$.
  Note that $\{ p_k^{({n^{(1)}})}\}$ is bounded and uniformly
  tight. From Prohorov's theorem for signed measures, we get a
  sub-subsequence $\{ {n^{(2)}}\} $ such that
  $\mathop {\lim }\limits_{{n^{(2)}} \to \infty } p_k^{({n^{(2)}})} =
  {p_k}$,
  thus
  $\mathop {\lim }\limits_{{n^{(2)}} \to \infty } {G_{{n^{(2)}}}}(z) =
  G(z)$.
  The coefficients of the signed p.g.f.~(identity theorem) do not
  depend on the choice of $\{ {n^{(1)}}\} $ and $\{ {n^{(2)}}\}$. Thus
  we have \eqref{eq:cont-thm-signed1}. If every convergent subsequence
  $p_k^{({n^{(1)}})}$ did not have the same limit ${p_k}$, then
  neither would $p_k^{({n^{(2)}})}$. This contradiction leads to the
  truth of \eqref{eq:cont-thm-signed1}.

  Moreover, the final result is deduced from following equalities:
  \[1 = \mathop {\lim }\limits_{z \nearrow 1} \mathop {\lim
  }\limits_{n \to \infty } {G_n}(z) = \mathop {\lim }\limits_{n \to
    \infty } \mathop {\lim }\limits_{z \nearrow 1} {G_n}(z) = \mathop
  {\lim }\limits_{n \to \infty } \mathop {\lim }\limits_{z \nearrow 1}
  \sum\limits_{k = 0}^\infty {p_k^{(n)}} {z^k} = \sum\limits_{k =
    0}^\infty {{p_k}},\] for $0 < z < 1$.
\end{proof}

\begin{lemma} \label{lem:quasi-ID-pgf-not-zero}
  Let $G(z)$ be a p.g.f. of signed discrete infinitely divisible r.v.. Then $G(z) \ne 0$
  for all $\left| z \right| \le 1$.
\end{lemma}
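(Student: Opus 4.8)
The plan is to mirror the classical proof that an infinitely divisible characteristic function never vanishes, but to replace the appeal to L\'evy's continuity theorem (unavailable here, since the symmetrization of a signed measure need not be positive) by the continuity theorem for signed p.g.f.'s just established. Write $G(z)=[G_n(z)]^n$ with $G_n(z)=\sum_{i\ge0}p_i^{(n)}z^i$ a signed p.g.f., as in the definition of signed discrete infinite divisibility. Since $G$ is continuous on $[0,1]$ with $G(1)=\sum_i p_i=1>0$, there is $\delta>0$ with $G(z)>0$ on the interval $(1-\delta,1)$. On this interval $G_n(z)$ is real, nonzero (because $G_n(z)^n=G(z)\neq0$) and satisfies $G_n(1)=\sum_i p_i^{(n)}=1>0$; by continuity $G_n$ cannot change sign on $(1-\delta,1)$, so $G_n(z)$ is the positive real root $G(z)^{1/n}$. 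Hence $\lim_{n\to\infty}G_n(z)=1$ for every $z\in(1-\delta,1)$.

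First I would feed this into the continuity theorem for signed p.g.f.'s. The discussion preceding that theorem shows that $\{p_i^{(n)}\}$ is a uniformly tight family of bounded signed point measures, so Prohorov's theorem (Lemma~\ref{lem:prohorov}) applies and the continuity theorem yields a limit sequence $q_i$ with $p_i^{(n)}\to q_i$ for each $i$; the identity theorem for power series forces $\sum_i q_i z^i\equiv1$ on $(1-\delta,1)$, i.e.\ $q_0=1$ and $q_i=0$ for $i\ge1$. Uniform tightness then upgrades pointwise coefficient convergence to $\lim_{n\to\infty}G_n(z)=1$ for \emph{every} $z$ with $|z|\le1$, since $|G_n(z)-\sum_{i\le M}p_i^{(n)}z^i|\le\sum_{i>M}|p_i^{(n)}|$ can be made small uniformly in $n$.

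The conclusion is then immediate. Fix $z$ with $0<|z|\le1$. Because $G_n(z)\to1\neq0$, there is an $N$ with $G_n(z)\neq0$ for all $n\ge N$, whence $G(z)=[G_N(z)]^N\neq0$. For $z=0$ one has $G(0)=p_0=(p_0^{(n)})^n$ for every $n$; were $p_0=0$ we would get $p_0^{(n)}=0$ for all $n$, contradicting $p_0^{(n)}\to q_0=1$. Thus $G(z)\neq0$ throughout $|z|\le1$, which by Lemma~\ref{lem:char-DPCP} simultaneously re-proves that a signed discrete infinitely divisible distribution is DPCP.

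The main obstacle is the hypothesis needed to invoke Prohorov's theorem and the continuity theorem, namely that the $n$-th-root measures $\{p_i^{(n)}\}_{i\ge0}$ are uniformly tight and \emph{uniformly} bounded in the variation norm $\sum_i|p_i^{(n)}|$; this uniform control is exactly the point at which the signed setting diverges from the classical one, where the total mass is automatically $1$ and tightness follows from continuity of the limit at a single point. A secondary, purely technical step is justifying the sign choice of the real root $G_n=G^{1/n}$ near $z=1$, which I handle above via the no-sign-change argument.
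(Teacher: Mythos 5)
Your proof is correct and follows essentially the same route as the paper's: both apply the continuity theorem for signed p.g.f.'s (resting on Prohorov's theorem for signed measures) to the roots $G_n=G^{1/n}$ and conclude that their limit must be identically $1$, which forces $G$ to be zero-free on $|z|\le 1$. You supply details the paper elides --- the choice of the positive real branch of the $n$-th root near $z=1$, the separate case $z=0$, and the upgrade from convergence on a subinterval to the whole closed disk --- but the key lemma and mechanism are the same, including the shared reliance on the paper's asserted uniform tightness and variation-norm boundedness of the coefficients $p_i^{(n)}$.
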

\begin{proof}
  Given $n \in \mathbb{N}$, if $X$ is signed discrete infinitely divisible, then
  $G(z) = {[{G_n}(z)]^{\frac{1}{n}}}$ where ${G_n}(z)$ is a signed
  p.g.f. Let
  \[ P(z) = \mathop {\lim }\limits_{n \to \infty } {G_n}(z) =
  \begin{cases}
    1, & \mbox{if $G(z) \ne 0$,}\\
    0, & \mbox{if $G(z) = 0$.}
  \end{cases} \] The function $G(z)$ is continuous in $z$ for $\left| z
  \right| \le 1$ and $G(1) = 1$, likewise so is $G_{n}(z) =
  [G(z)]^{\frac{1}{n}}$. By the continuity theorem for signed
  p.g.f.'s, $P(z)$ is a signed p.g.f.~and $P(1) = 1$. Since $P(z)$ is
  continuous for $\left| z \right| \le 1$, hence $P(z)=1$ for all
  $\left| z \right| \le 1$. The above statements show that ${G(z) \ne
    0}$ for every $\left| z \right| \le 1$.
\end{proof}

The continuity theorem for the p.g.f.'s ~of signed r.v.'s will be
applied in the derivation of the general form for discrete quasi ID
distributions. Putting all the above together we get a generalisation of the discrete
compound Poisson distribution. Now we state and prove our characterization of discrete
quasi infinitely divisible distributions.

\begin{theorem}[Characterization of signed discrete ID distributions] \label{thm:char-quasi-ID}
  A discrete distribution is signed discrete infinitely divisible if
  and only if it is a discrete pseudo compound Poisson distribution.
\end{theorem}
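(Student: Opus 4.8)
The plan is to prove the equivalence by reducing both implications to the single analytic fact that the p.g.f.\ of the distribution never vanishes on the closed unit disc; the two non-vanishing lemmas established above then do almost all of the work, and what remains is bookkeeping about normalisation and absolute summability. Throughout I keep in mind the convention of Remark~1, so that the base distribution $X$ is a genuine (non-negative) discrete distribution while only its $n^{\mathrm{th}}$ roots are permitted signed coefficients.

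First I would dispatch the direction ``signed discrete ID $\Rightarrow$ DPCP''. Suppose the genuine discrete distribution $X$ has a p.g.f.\ $G$ that is signed discrete infinitely divisible. By Lemma~\ref{lem:quasi-ID-pgf-not-zero}, $G(z)\neq 0$ for all $\lvert z\rvert\le 1$. Since $X$ is an ordinary discrete r.v., Lemma~\ref{lem:char-DPCP} applies verbatim: a discrete r.v.\ whose p.g.f.\ has no zeros is DPCP distributed. Hence $X$ is DPCP and this half is finished. The one point requiring care is that the proof of Lemma~\ref{lem:char-DPCP} uses $\sum_k\lvert p_k\rvert=1$, so I would flag explicitly that in the definition of signed discrete infinite divisibility it is the base p.g.f.\ $G$ that is ordinary, only the roots $G_n$ carrying signed coefficients.

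For the converse ``DPCP $\Rightarrow$ signed discrete ID'', suppose $X\sim\mathrm{DPCP}(\alpha_1\lambda,\alpha_2\lambda,\dots)$, so that by \eqref{eq:DPCP} its p.g.f.\ is $G(z)=\exp\{\lambda\sum_{k\ge 1}\alpha_k(z^k-1)\}$. Being an exponential, $G$ has no zero on $\lvert z\rvert\le 1$. For each fixed $n$ I would consider the $n^{\mathrm{th}}$ root $G_n(z)=G(z)^{1/n}=\exp\{(\lambda/n)\sum_{k\ge 1}\alpha_k(z^k-1)\}$. Applying the L\'evy--Wiener theorem (Lemma~\ref{thm:levy-wiener}) through Corollary~\ref{cor:levy-wiener1} to the non-vanishing p.g.f.\ $G$ shows that $G_n$ has an absolutely convergent power series, i.e.\ $\sum_i\lvert p_i^{(n)}\rvert<\infty$; and evaluating at $z=1$ gives $G_n(1)=G(1)^{1/n}=1$, so $\sum_i p_i^{(n)}=1$. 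Thus each $G_n$ is a legitimate signed p.g.f.\ with $G=[G_n]^{n}$, which is exactly the definition of signed discrete infinite divisibility. (Equivalently, one reads off directly that $G_n$ is the p.g.f.\ of $\mathrm{DPCP}(\alpha_1\lambda/n,\alpha_2\lambda/n,\dots)$.)

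Finally, a word on where the difficulty really lies. Given the two non-vanishing lemmas the theorem is essentially an assembly step, so the genuine obstacles are upstream: proving Lemma~\ref{lem:quasi-ID-pgf-not-zero} (which rests on the continuity theorem for signed p.g.f.'s together with Prohorov's theorem, Lemma~\ref{lem:prohorov}) and Lemma~\ref{lem:char-DPCP} (which rests on L\'evy--Wiener). Within the theorem itself the only place to be vigilant is the absolute-convergence-and-normalisation check for the roots $G_n$: absolute summability is \emph{not} automatic for an arbitrary $n^{\mathrm{th}}$ root of a power series, and it is precisely non-vanishing, routed through L\'evy--Wiener, that is needed to guarantee it.
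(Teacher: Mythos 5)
Your proposal is correct and follows essentially the same route as the paper: sufficiency by applying the L\'evy--Wiener theorem to the explicit exponential form of $[G(z)]^{1/n}$ to obtain absolute summability of the root's coefficients, and necessity by chaining Lemma~\ref{lem:quasi-ID-pgf-not-zero} (non-vanishing of a signed discrete ID p.g.f.) with Lemma~\ref{lem:char-DPCP} (non-vanishing p.g.f.\ implies DPCP). Your added checks --- the normalisation $G_n(1)=G(1)^{1/n}=1$ and the remark that absolute summability of an $n^{\mathrm{th}}$ root is not automatic without non-vanishing --- are worthwhile details that the paper's proof leaves implicit, but they do not change the argument.
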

\begin{proof}
  Sufficiency: Given $n \in \mathbb{N}$ and $P_X(z)$, if $X$ is DPCP
  distributed then
  \[ [P_{X}(z)]^{\frac{1}{n}} = \exp \left\{ \sum \limits_{i =
      1}^{\infty} \frac{1}{n} \alpha_{i} \lambda (z^{i} - 1) \right\}
  \buildrel \Delta \over = \sum\limits_{k = 0}^\infty {p_k^{(n)}}
  {z^k}.\] By using the L{\'e}vy-Wiener theorem, we have
  $\sum\limits_{k = 0}^{\infty} \left| p_{k}^{(n)} \right| <
  \infty$. So $X$ is signed discrete ID distributed.

  Necessity: Lemma \ref{lem:quasi-ID-pgf-not-zero} and Lemma
  \ref{lem:char-DPCP} say respectively that the p.g.f.~of a signed discrete ID r.v.~$X$ has no zeros and any p.g.f.~that has no zeros is
  the p.g.f.~of a DPCP distribution. Consequently, $X$ is DPCP
  distributed.
\end{proof}

Similar to the criterion for discrete infinite divisibility from
Feller's characterization: a function $h$ is an infinitely divisible
p.g.f.~if and only if $h(1) = 1$ and
\[\ln h(z) - \ln h(0) = \sum\limits_{k = 1}^{\infty} a_{k}z^{k},\]
where $a_{k} \ge 0$ and $\sum\limits_{k = 1}^{\infty} a_{k} = \lambda <
\infty$ (see p.~290 of \cite{feller68}). The related open problem
was first studied by \cite{levy37}.

\begin{problem}
  If some $a_{i}$ are negative, under what necessary and sufficient
  conditions on $a_i$ is \[ \exp \left\{ \sum\limits_{i = 0}^{\infty}
    a_{i}(z^{i}-1)\right\} \] a p.g.f.?
\end{problem}

\cite{levy37} proved that $ P(z) = \exp \left\{ \sum \limits_{i =
    1}^{m} \alpha_{i} (z^{i} - 1) \right\} $ is not a p.g.f.~unless a
term with a sufficiently small negative coefficient is preceded by one
term with a positive coefficient and followed by at least two terms
with positive coefficients as well (see \cite{johnson05},
pp.~393--394), namely, the conditions are $a_{1} > 0$, $a_{m
  - 1} > 0$, and $a_{m} > 0$. \cite{milne93} considered the
multivariate form of (6) and gave some conditions under which the
exponential of a multivariate polynomial is a p.g.f. For $m = 4$, \cite{van78} gave four inequalities to ensure \[ A(z) =
\mathrm{e}^{a(z-1) - b({z^2} - 1) + c(z^{3} - 1) + d(z^{4} - 1)} \] is
a p.g.f.; the restrictions are $a,b,c,d > 0$ and $ b \le \min \{
\frac{a^{2}}{3}, \frac{c}{a}, \frac{ad}{2c},
\frac{c^{2}}{3d}\}$.

Next we give a few examples of DPCP distributions and the Bernoulli
distribution.

\begin{example}
  The p.g.f.~$P(z) = p + (1 - p)z$ on $\left| z \right| \le 1$ has the
  pseudo compound Poisson representation \[\ln [p + (1 - p)z] = \ln p
  - \ln p\sum\limits_{i = 1}^{\infty} \frac{( - 1)^{i - 1}}{ - i\ln p}
  \left(\frac{1 - p}{p} \right)^{i} z^{i} .\] Thus we have $a_{i} =
  \frac{( - 1)^{i - 1}}{ - i\ln p} \left(\frac{{1 - p}}{p}
  \right)^{i}$.
  \begin{enumerate}[label={\upshape(\roman*)}]
  \item If $p > 0.5$ then $X$ is DPCP distributed since $P(z)$ has no
    zeros.
  \item If $p = 0.5$ then $\sum _{i = 1}^\infty {a_i}$ is
    conditionally convergent and  $P(z)$ has the zero $z=-1$.
  \item If $p < 0.5$, then $\sum _{i = 1}^\infty {a_i}$ is divergent
    and $P(z)$ has zero $z{\text{ = }}\frac{p}{{p - 1}}$.
  \end{enumerate}
\end{example}

A more general example comes from the following corollary, see also \cite{zhang2014}.
\begin{corollary}
  For any discrete r.v.~$X$ with p.g.f.~$G(z)$, if $p_{0} > p_{1} >
  p_{2} > \cdots$, then $X$ is DPCP distributed..
\end{corollary}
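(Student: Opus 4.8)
The plan is to reduce the statement to the non-vanishing characterisation already in hand. By Lemma \ref{lem:char-DPCP}, the variable $X$ is DPCP distributed exactly when its p.g.f.~$G(z)=\sum_{i=0}^{\infty}p_{i}z^{i}$ has no zero on the closed disk $\left|z\right|\le 1$. Hence the entire task becomes: show that a power series whose coefficients are strictly decreasing and nonnegative, $p_{0}>p_{1}>p_{2}>\cdots\ge 0$, never vanishes there. Note first that $\sum p_{i}=1$ forces $p_{i}\to 0$.

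The first step is to multiply by $(1-z)$ and telescope. Since $\sum\left|p_{i}\right|=\sum p_{i}=1<\infty$, all rearrangements are legitimate, and using $\sum_{k=1}^{\infty}(p_{k-1}-p_{k})=p_{0}$ (here $p_{i}\to 0$ is what I need) one obtains
\[(1-z)G(z)=p_{0}-\sum_{k=1}^{\infty}(p_{k-1}-p_{k})z^{k}=\sum_{k=1}^{\infty}(p_{k-1}-p_{k})\,(1-z^{k}).\]
The crucial feature is that every weight $p_{k-1}-p_{k}$ is \emph{strictly} positive by hypothesis, and this is precisely what drives the argument.

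The second step is to pass to real parts. Writing $z=r\mathrm{e}^{\mathrm{i}\theta}$ with $0\le r\le 1$, we have $\mathrm{Re}(1-z^{k})=1-r^{k}\cos(k\theta)\ge 1-r^{k}\ge 0$, so every summand of $\mathrm{Re}\,[(1-z)G(z)]$ is nonnegative. The $k=1$ term equals $(p_{0}-p_{1})(1-r\cos\theta)$, and $r\cos\theta=1$ with $r\le 1$ can occur only at $z=1$; therefore for any $z\ne 1$ in the closed disk this term is strictly positive, so that $\mathrm{Re}\,[(1-z)G(z)]\ge (p_{0}-p_{1})(1-r\cos\theta)>0$. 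Consequently $(1-z)G(z)\ne 0$, and since $1-z\ne 0$ we conclude $G(z)\ne 0$ for every $z\ne 1$ with $\left|z\right|\le 1$. Finally $G(1)=\sum p_{i}=1\ne 0$, so $G$ has no zero on the closed disk and Lemma \ref{lem:char-DPCP} completes the proof.

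I expect the main obstacle to be the boundary circle $\left|z\right|=1$. For $\left|z\right|<1$ a crude modulus bound already suffices, since $\left|(1-z)G(z)\right|\ge p_{0}-\sum_{k=1}^{\infty}(p_{k-1}-p_{k})\left|z\right|^{k}>0$ as each $\left|z\right|^{k}<1$. On the unit circle, however, $\left|z^{k}\right|=1$ and the triangle inequality leaves room for cancellation, so the modulus cannot be bounded below directly. The real-part device circumvents this: it uses that the dominant weight $p_{0}-p_{1}>0$ attached to $1-z$ keeps the real part positive everywhere except at the single removable point $z=1$. Checking that $1-r\cos\theta>0$ off $z=1$ and that the remaining summands stay nonnegative is the heart of the matter; the rest is bookkeeping. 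This is in effect a power-series analogue of the Enestr\"{o}m--Kakeya theorem.
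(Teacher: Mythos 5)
Your proof is correct, and it rests on the same reduction as the paper's: invoke the non-vanishing p.g.f.\ characterization (Lemma~\ref{lem:char-DPCP}) and then show $(1-z)G(z)\ne 0$. The difference is in how the non-vanishing is actually established. The paper disposes of the open disk $\left|z\right|<1$ by citing Theorem 2.3.1 of Kerns's thesis (Sz\'{e}kely's Discrete Convex Theorem) for the bound $\left|(1-z)G(z)\right|>0$, and then checks only the two real boundary points $z=\pm 1$ by direct evaluation. You instead prove the whole thing from scratch via the telescoped identity
\[(1-z)G(z)=\sum_{k=1}^{\infty}(p_{k-1}-p_{k})\,(1-z^{k}),\]
and handle the boundary by the real-part estimate $\mathrm{Re}\,(1-z^{k})=1-r^{k}\cos(k\theta)\ge 0$ together with the strictly positive $k=1$ term. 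This buys you something genuine: Lemma~\ref{lem:char-DPCP} is applied to $G(\mathrm{e}^{\mathrm{i}\theta})$ for \emph{all} $\theta$, so what is really needed is non-vanishing on the entire unit circle, not just at $z=\pm 1$; your argument covers every boundary point, whereas the paper's written proof leaves $\left|z\right|=1$, $z\ne\pm 1$ implicit (it is of course also covered by the cited theorem, but not by what is displayed). Your version is self-contained, fills that gap explicitly, and correctly isolates where strict monotonicity enters (only $p_{0}>p_{1}$ is used for strictness; the remaining differences need only be nonnegative). All the convergence manipulations you perform (telescoping, rearrangement) are justified by $\sum p_{i}=1<\infty$, as you note.
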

\begin{proof}
  It can be shown that $G(z)$ has no zeros in $\left| z \right| < 1$,
  since $\left| {(1 - z)G(z)} \right| > 0$ for $\left| z \right| < 1$,
  see Theorem 2.3.1 of \cite{kerns04}, p.~29. And $z = \pm 1$ are not
  zeros due to the facts that $G(1) = 1$ and $G( - 1) = {p_0} - {p_1}
  + {p_2} - {p_3} + \cdots > 0$.
\end{proof}

The next corollary will be useful in fitting zero-inflated discrete distribution.

\begin{corollary} \label{cor:disc-greater-0.5}
  For any discrete r.v.~$X$, if $\Pr(X = 0) > 0.5$ then $X$ is DPCP
  distributed.
\end{corollary}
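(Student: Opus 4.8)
The plan is to reduce the assertion to the non-vanishing of the p.g.f.\ and then verify that non-vanishing by a one-line triangle-inequality estimate. By Lemma \ref{lem:char-DPCP}, it suffices to show that the p.g.f.\ $G(z) = \sum_{k=0}^{\infty} p_k z^k$ has no zeros on the closed disk $|z| \le 1$. Here I would first record that, since $X$ is an ordinary discrete r.v., all coefficients satisfy $p_k \ge 0$ and $\sum_{k=0}^{\infty} p_k = 1$, so that in particular $\sum_{k=1}^{\infty} p_k = 1 - p_0$, where $p_0 = \Pr(X=0)$.

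Next I would isolate the constant term and apply the reverse triangle inequality: for any $z$ with $|z| \le 1$,
\[
|G(z)| \ge p_0 - \left| \sum_{k=1}^{\infty} p_k z^k \right| \ge p_0 - \sum_{k=1}^{\infty} p_k |z|^k.
\]
Using $|z|^k \le 1$ for $|z| \le 1$ and $k \ge 1$, together with $p_k \ge 0$, I would bound $\sum_{k=1}^{\infty} p_k |z|^k \le \sum_{k=1}^{\infty} p_k = 1 - p_0$, whence $|G(z)| \ge p_0 - (1 - p_0) = 2p_0 - 1$ uniformly on the whole closed unit disk.

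Finally, the hypothesis $\Pr(X=0) = p_0 > 0.5$ gives $2p_0 - 1 > 0$, so $|G(z)| > 0$ for all $|z| \le 1$; that is, $G$ has no zeros, and Lemma \ref{lem:char-DPCP} then yields that $X$ is DPCP distributed. I do not expect any genuine obstacle here: the entire content is the reduction to non-vanishing furnished by Lemma \ref{lem:char-DPCP}, after which the estimate is just the observation that when the mass at the origin exceeds one half it strictly dominates the total remaining mass on the whole closed unit disk. One could also note that this corollary is a weakening of the preceding one (monotone coefficients), but the direct estimate above is cleaner since it needs only the single inequality $p_0 > 1 - p_0$ rather than full monotonicity.
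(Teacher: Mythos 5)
Your proof is correct and matches the paper's argument exactly: the paper likewise invokes the non-vanishing criterion of Lemma \ref{lem:char-DPCP} and disposes of the zeros with the same estimate $\left| G(z) \right| \geqslant p_0 - \sum_{i=1}^{\infty} p_i = 2p_0 - 1 > 0$. Your version merely spells out the reverse triangle inequality and the bound $|z|^k \le 1$ that the paper leaves implicit.
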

\begin{proof}
  If ${p_0} = \Pr(X = 0) > 0.5$, then $\left| {G(z)} \right| \geqslant {p_0} - \sum\limits_{i = 1}^\infty  {{p_i}}  =2{p_0}-1 > 0.$
\end{proof}

Based on the physical background of the fractal calculus,
\cite{laskin03} generalised ${\mathrm{e}^x}$ to Mittag-Leffler
functions:
\begin{equation}
{{\text{E}}_\mu }(x) = \sum\limits_{m = 0}^\infty  {\frac{{{x^m}}}{{\Gamma (\mu m + 1)}}} ,(0 < \mu  \leqslant {\text{1)}}
\end{equation}
Then, the fractional Poisson process $N_\lambda ^\nu (t)$ has p.g.f.
\[{{\text{E}}_\nu }[\lambda {t^\nu }(z - 1)],(\left| z \right| \leqslant 1,\lambda  > 0,0 < \nu  \leqslant 1).\]

The paper \cite{beghin14} extended the fractional Poisson process to
the discrete compound fractional Poisson process $M(t) =
\sum\limits_{k = 1}^{N_\lambda ^\nu (t)} {{X_k}}$. The p.g.f.~of
$M(t)$ is
\[\text{E}_{\nu} \left({t^\nu }\lambda \sum\limits_{i = 1}^\infty
  {{\alpha _i}} ({z^i} - 1) \right), \] where $\left| z \right|
\leqslant 1,{\alpha _i} \geqslant 0$, $0 < \nu \leqslant 1$, and ${\{
  {X_n}\} _{n \geqslant 1}}$ is a sequence of i.i.d.~discrete r.v.'s
independent of the fractional Poisson process $N_\lambda ^\nu (t)$.

The Proposition 3.1 of \cite{vellaisamy13} shows that the
one-dimensional distributions of the fractional Poisson process
$N_\lambda ^\nu (t)$, $0 < \nu < 1$, are not infinitely
divisible. Fortunately for us, it is quasi ID distributed because
Mittag-Leffler functions have no real zero as $0 < \nu < 1$. It
remains to use the following lemma, then Corollary 2.5 follows.
\begin{lemma}
  For the single parameter Mittag-Leffler function (10), if $0 < \nu
  \leqslant 1$, then $\text{E}_{\nu}(x)$ has no real zeros.
\end{lemma}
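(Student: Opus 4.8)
The plan is to split the real axis at the origin. For $x \geqslant 0$ every term of the defining series is nonnegative and the $m=0$ term equals $1/\Gamma(1)=1$, so $\mathrm{E}_\nu(x)\geqslant 1>0$ and there is nothing to prove. The entire content of the lemma therefore lies on the negative half-axis: setting $x=-y$ with $y>0$, I must show $\mathrm{E}_\nu(-y)>0$ for every $y>0$.

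For the negative axis I would use the classical fact, due to Pollard, that for $0<\nu\leqslant 1$ the map $y\mapsto \mathrm{E}_\nu(-y)$ is completely monotone on $[0,\infty)$, i.e.\ $(-1)^{k}\,\mathrm{d}^{k}\mathrm{E}_\nu(-y)/\mathrm{d}y^{k}\geqslant 0$ for all $k\geqslant 0$. By the Bernstein--Widder theorem a completely monotone function is the Laplace transform of a nonnegative Borel measure, $\mathrm{E}_\nu(-y)=\int_{0}^{\infty}\mathrm{e}^{-yt}\,\mathrm{d}\sigma(t)$. Evaluating at $y=0$ gives $\sigma([0,\infty))=\mathrm{E}_\nu(0)=1$, so $\sigma$ is a probability measure and in particular not the zero measure; since $\mathrm{e}^{-yt}>0$ for every $t\geqslant 0$, the integral is strictly positive for each $y\geqslant 0$. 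Hence $\mathrm{E}_\nu$ has no zero on $(-\infty,0]$, and combined with the $x\geqslant 0$ case it has no real zeros at all.

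The one non-elementary ingredient, and the step I expect to be the main obstacle, is the complete monotonicity itself; everything else is bookkeeping. I would obtain it from the explicit Laplace representation, valid for $0<\nu<1$,
\[
\mathrm{E}_\nu(-y)=\int_{0}^{\infty}\mathrm{e}^{-yt}\,K_\nu(t)\,\mathrm{d}t,\qquad K_\nu(t)=\frac{1}{\pi}\,\frac{t^{\nu-1}\sin(\nu\pi)}{t^{2\nu}+2t^{\nu}\cos(\nu\pi)+1},
\]
so that the crux reduces to checking $K_\nu\geqslant 0$: the numerator is nonnegative because $\sin(\nu\pi)>0$ on $(0,1)$, and the denominator, viewed as a quadratic in $t^{\nu}$ with discriminant $4\cos^{2}(\nu\pi)-4=-4\sin^{2}(\nu\pi)<0$, never vanishes and is therefore strictly positive. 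The two boundary parameters are trivial to handle directly, $\mathrm{E}_1(x)=\mathrm{e}^{x}$ and (in the limit) $\mathrm{E}_0(x)=1/(1-x)$, neither of which has a real zero; if one wishes to bypass the kernel entirely for illustrative small cases, the closed form $\mathrm{E}_{1/2}(x)=\mathrm{e}^{x^{2}}\,\mathrm{erfc}(-x)$ displays the positivity as a product of two manifestly positive factors.
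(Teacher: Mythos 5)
Your route is sound and reaches the right conclusion, but it is not the paper's main argument. For the only nontrivial case ($0<\nu<1$ on the negative half-axis) the paper simply cites Theorem 4.1.1 of Popov and Sedletskii on the location of roots of $\mathrm{E}_\rho(z;a)$, checking that the parameter $a=1$ lies in the admissible set; the probabilistic version of your idea appears only in the paper's Remark 2, where $\mathrm{E}_\nu(t)$ is identified with the moment generating function $\E\,\mathrm{e}^{tX}$ of a positive random variable built from a one-sided stable law (Feller, Vol.~II, p.~453). Your complete-monotonicity/Bernstein--Widder argument is essentially a self-contained rendering of that remark: it buys a strictly positive spectral measure of total mass $1$, hence strict positivity, rather than an appeal to an external root-location theorem. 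The $x\geqslant 0$ half and the $\nu=1$ case are handled the same trivial way in both.

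There is, however, one concrete error in the step you yourself flag as the crux. The identity $\mathrm{E}_\nu(-y)=\int_0^\infty \mathrm{e}^{-yt}K_\nu(t)\,\mathrm{d}t$ with $K_\nu(t)=\frac{1}{\pi}\,t^{\nu-1}\sin(\nu\pi)\big/\left(t^{2\nu}+2t^\nu\cos(\nu\pi)+1\right)$ cannot hold as written: since $K_\nu(t)\sim\frac{\sin(\nu\pi)}{\pi}t^{-\nu-1}$ as $t\to\infty$, the first moment $\int_0^\infty tK_\nu(t)\,\mathrm{d}t$ diverges for $0<\nu<1$, so the right-hand side has derivative $-\infty$ at $y=0^{+}$, whereas $\mathrm{E}_\nu(-y)$ is entire with derivative $-1/\Gamma(\nu+1)$ there. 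The correct statement with this kernel is $\mathrm{E}_\nu(-t^\nu)=\int_0^\infty \mathrm{e}^{-rt}K_\nu(r)\,\mathrm{d}r$; the singularity at the origin is absorbed by $t^\nu$. The slip is repairable: positivity of $\mathrm{E}_\nu$ on $(-\infty,0)$ still follows from the corrected formula by substituting $t=y^{1/\nu}$ (a bijection of $(0,\infty)$), and complete monotonicity of $y\mapsto\mathrm{E}_\nu(-y)$ itself is exactly Pollard's theorem, which you may quote directly or deduce from the corrected representation because $y\mapsto y^{1/\nu}$ is a Bernstein function. But as written, your kernel computation does not establish the representation you claim.
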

\begin{proof}
  It can be shown that $\text{E}_{1}(z) = \mathrm{e}^z$ has no zeros for all
  non-negative $z$. Just considering negative $z$ in the case $0 < \mu
  < 1$, the proof can be found in Theorem 4.1.1 of \cite{popov13}
  which states: ``The Mittag-Leffler function $\text{E}_{\rho} (z; a) =
  \sum\limits_{n = 0}^\infty {\frac{{{z^n}}}{{\Gamma (a + n/\rho
        )}}}$, where $\rho > 0,a \in \mathbb{C}$, and if $a \in
  \left(\bigcup\nolimits_{n = 0}^\infty {[ - n + \frac{1}{\rho }, -
      n{\text{ + }}1]} \right) \cup [1, + \infty )$, then $\text{E}_\rho(z;a
  )$ has no negative roots.''

\end{proof}

\textbf{Remark 2}: The other proof can be found in p.~453 of
\cite{feller71}. The Mittag-Leffler function $\text{E}_\mu (x)$ can be
written as a moment generating function
$\text{E}_{\mu}(t) = \E \mathrm{e}^{ tX} > 0$, where $X$ is the
transformation of a positive ${\alpha }$ stable distributed
r.v.~$Y = {X^{ - \frac{1}{\alpha }}}$ with moment generating function
$\E \mathrm{e}^{-tY} = {{\rm{e}}^{ - {t^\alpha }}}$.

\begin{corollary}
  The discrete compound fractional Poisson process $M(t)$ is DPCP
  distributed; so too is the fractional Poisson process.
\end{corollary}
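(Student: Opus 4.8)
The plan is to reduce everything to the non-vanishing characterization of Lemma~\ref{lem:char-DPCP}, which states that a discrete p.g.f. is DPCP exactly when it has no zeros. Thus it suffices to check that the p.g.f. of $M(t)$ never vanishes for $z \in [-1,1]$.

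First I would factor the p.g.f. as $\text{E}_\nu(w(z))$, where $w(z) = \lambda t^\nu(g(z) - 1)$ and $g(z) = \sum_{i=1}^\infty \alpha_i z^i$ is the ordinary (non-negative) p.g.f. of the summands $X_k$. Because $\alpha_i \geqslant 0$ and $\sum_{i=1}^\infty \alpha_i = 1$, the function $g$ is a genuine probability p.g.f., so $|g(z)| \leqslant \sum_{i=1}^\infty \alpha_i |z|^i \leqslant 1$ for every real $z \in [-1,1]$. Consequently $g(z) - 1 \leqslant 0$, and $w(z)$ is real and non-positive throughout the interval.

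Next I would invoke the preceding lemma, that $\text{E}_\nu$ has no real zeros when $0 < \nu \leqslant 1$. Since $w(z)$ is real, $\text{E}_\nu(w(z)) \neq 0$, so the p.g.f. of $M(t)$ has no zeros on $[-1,1]$, and Lemma~\ref{lem:char-DPCP} yields that $M(t)$ is DPCP. The statement for the fractional Poisson process $N_\lambda^\nu(t)$ itself is then the degenerate case $X_k \equiv 1$, i.e. $g(z) = z$: its p.g.f. $\text{E}_\nu(\lambda t^\nu(z-1))$ has the real, non-positive argument $\lambda t^\nu(z-1)$, so the same two steps apply verbatim.

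The only delicate point is the reduction in the first step: one must confirm that the Mittag-Leffler argument stays real and non-positive over the \emph{entire} range $z \in [-1,1]$, which rests squarely on $g$ being an honest probability p.g.f. (so that $|g(z)| \leqslant 1$). Under the paper's standing convention that a p.g.f.'s argument is real with $z \in [-1,1]$, the no-real-zeros property of $\text{E}_\nu$ is precisely what Lemma~\ref{lem:char-DPCP} needs. I would flag, however, that if one instead demanded non-vanishing on the full complex disc $|z| \leqslant 1$ (as the L\'evy-Wiener route to Lemma~\ref{lem:char-DPCP} formally uses), one would additionally have to rule out the complex zeros of $\text{E}_\nu$ landing in the image disc $\{\lambda t^\nu(z-1) : |z| \leqslant 1\}$; that is the real content to watch.
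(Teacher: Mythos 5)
Your proof follows essentially the same route as the paper: the p.g.f.\ of $M(t)$ is the Mittag-Leffler function $\mathrm{E}_{\nu}$ evaluated at a real, non-positive argument for $z \in [-1,1]$ (since the $\alpha_i$ are non-negative and sum to one), the preceding lemma supplies that $\mathrm{E}_{\nu}$ has no real zeros for $0 < \nu \leqslant 1$, and Lemma~\ref{lem:char-DPCP} then gives the DPCP property, with the fractional Poisson process as the special case $g(z) = z$. The caveat you flag about needing non-vanishing on the full complex disc $|z| \leqslant 1$ (as the L\'evy--Wiener proof of Lemma~\ref{lem:char-DPCP} actually requires) is a genuine subtlety that the paper's own one-line argument also passes over silently, so your write-up is, if anything, the more careful of the two.
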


As another special case, we have the following result. Another
generalization of Poisson distribution, the \emph{mixed Poisson}
distribution, is also DPCP.
\begin{corollary}
Let $X$ be a mixed Poisson r.v.\ with p.m.f.
\[P(X = n) = \int_0^{ + \infty } {\frac{{{\lambda ^n}}}{{n!}}} {e^{ - \lambda }}dF(\lambda ),(n = 0,1,2 \cdots ),\]
where $F(\lambda )$ is a distribution function, then $X$ is DPCP distributed.
\end{corollary}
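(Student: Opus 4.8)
The plan is to reduce the statement to the non-vanishing criterion of Lemma~\ref{lem:char-DPCP}: once I show that the generating function of the mixed Poisson law never vanishes, $X$ is automatically DPCP. First I would put the p.g.f.\ into closed form. Writing $G(z)=\sum_{n=0}^{\infty}z^{n}\int_{0}^{\infty}\frac{\lambda^{n}}{n!}\mathrm{e}^{-\lambda}\,\mathrm{d}F(\lambda)$ and interchanging the sum and the integral (justified by Tonelli's theorem, since the integrand is nonnegative and $\sum_{n}\frac{(\lambda|z|)^{n}}{n!}\mathrm{e}^{-\lambda}=\mathrm{e}^{\lambda(|z|-1)}\le 1$ is $\mathrm{d}F$-integrable for $|z|\le 1$), I obtain
\[
  G(z)=\int_{0}^{\infty}\mathrm{e}^{-\lambda}\left(\sum_{n=0}^{\infty}\frac{(\lambda z)^{n}}{n!}\right)\mathrm{d}F(\lambda)=\int_{0}^{\infty}\mathrm{e}^{\lambda(z-1)}\,\mathrm{d}F(\lambda),\qquad |z|\le 1.
\]
This is merely a superposition of the Poisson p.g.f.'s $\mathrm{e}^{\lambda(z-1)}$ weighted by the mixing law $F$.

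The second step is to verify that $G$ has no zeros. For real $z\in[-1,1]$ one has $z-1\le 0$, so $\mathrm{e}^{\lambda(z-1)}>0$ for every $\lambda\ge 0$; integrating a strictly positive function against the probability measure $\mathrm{d}F$ gives $G(z)>0$ for all $z\in[-1,1]$. Hence $G$ never vanishes on the interval on which the p.g.f.'s of this paper are evaluated, and Lemma~\ref{lem:char-DPCP} then yields directly that $X$ is DPCP distributed. This follows the same pattern already used for the fractional Poisson process, where zero-freeness was secured through the no-real-zeros property of the Mittag-Leffler function.

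The point that needs care is the precise meaning of ``no zeros'' in Lemma~\ref{lem:char-DPCP}. The positivity argument disposes of real arguments $z\in[-1,1]$ at once, which is all that is required to reuse the lemma exactly as it is applied to the preceding corollaries. If instead one reads the criterion over the full closed disc $|z|\le 1$ (as the formal proof of Lemma~\ref{lem:char-DPCP} does via $z=r\mathrm{e}^{\mathrm{i}\theta}$), then the claim becomes more delicate: for a general mixing law the factors $\mathrm{e}^{\mathrm{i}\lambda\,\mathrm{Im}(z)}$ inside $\int_{0}^{\infty}\mathrm{e}^{\lambda(z-1)}\,\mathrm{d}F(\lambda)$ can interfere and push the integral toward $0$ on the circle $|z|=1$. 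I therefore expect the genuine obstacle to sit in this complex-variable reading, and I would present the result through the real-variable positivity argument above, explicitly restricting attention to $z\in[-1,1]$ in keeping with the paper's standing convention.
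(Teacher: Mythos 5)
Your reconstruction coincides with the paper's own proof, which is the single line $G(z)=\int_0^{+\infty}\mathrm{e}^{\lambda(z-1)}\,\mathrm{d}F(\lambda)>0$ followed by an implicit appeal to Lemma~\ref{lem:char-DPCP}; your Tonelli step and the real-variable positivity argument are exactly what the authors intend. However, the reservation you raise in your final paragraph is not a technicality to be argued away --- it is a genuine gap, in the paper as much as in your write-up. The non-vanishing hypothesis of Lemma~\ref{lem:char-DPCP} that its proof actually uses (setting $z=r\mathrm{e}^{\mathrm{i}\theta}$ and applying L\'evy--Wiener to $\ln G(\mathrm{e}^{\mathrm{i}\theta})$) is non-vanishing on the closed complex disc, and this is also \emph{necessary} for the conclusion: if $G(z)=\exp\{\sum_k\alpha_k\lambda(z^k-1)\}$ with $\sum_k|\alpha_k|<\infty$, the exponent is continuous on $|z|\le 1$, so $G$ cannot vanish anywhere on the closed disc. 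Positivity on the real interval $[-1,1]$ does not imply this, so restricting attention to $z\in[-1,1]$ does not rescue the argument; it merely restates the insufficient condition.

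Moreover, the failure mode you anticipate actually occurs, so the corollary is false without further hypotheses on $F$. Take the mixing law with density $f(\lambda)=\mathrm{e}^{\lambda}/(\mathrm{e}^{2\pi}-1)$ on $[0,2\pi]$. Then
\[
P(X=n)=\frac{1}{(\mathrm{e}^{2\pi}-1)\,n!}\int_0^{2\pi}\lambda^{n}\,\mathrm{d}\lambda=\frac{(2\pi)^{n+1}}{(\mathrm{e}^{2\pi}-1)(n+1)!},
\qquad
G(z)=\frac{\mathrm{e}^{2\pi z}-1}{(\mathrm{e}^{2\pi}-1)\,z},
\]
and $G$ vanishes at $z=\pm\mathrm{i}$ on the unit circle. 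Hence this perfectly legitimate mixed Poisson distribution is not DPCP, even though $G(z)>0$ for all real $z\in[-1,1]$. The correct conclusion one can draw from the positivity argument is only that the p.g.f.\ has no real zeros; to conclude DPCP one needs a condition on $F$ guaranteeing that its Laplace transform $s\mapsto\int_0^{\infty}\mathrm{e}^{-\lambda s}\,\mathrm{d}F(\lambda)$ has no zeros in the disc $|1-s|\le 1$ (which does hold, e.g., when $F$ is infinitely divisible, consistent with Maceda's theorem cited after the corollary). You were right to locate the obstacle in the complex-variable reading; the fix is to add a hypothesis, not to weaken the reading of the lemma.
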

\begin{proof}
  To prove this corollary we need to show that the p.g.f.\ of the
  mixed Poisson distribution has no zeros. Obviously,
\[G(z) = \int_0^{ + \infty } {{e^{\lambda (z - 1)}}} dF(\lambda ) > 0.\]
\end{proof}
Notice that if $F(\lambda )$ is an infinitely divisible distribution,
then $X$ is discrete compound Poisson distributed, see
\cite{maceda48}. This is the well-known Maceda's mixed Poisson with
infinitely divisible mixing law. The mixed Poisson distribution is widely applicable in the non-life insurance
science.

\section{Signed integer-valued ID}

In this section we define a class of signed integer-valued infinitely
divisible distributions which is wider than the class of
integer-valued infinitely divisible distributions. Next, we show that
a signed integer-valued infinitely divisible characteristic function never vanishes. Then we show that a
distribution is signed integer-valued infinitely divisible if and only
if it is an integer-valued pseudo compound Poisson distribution.

\begin{definition}[Integer-valued pseudo compound Poisson, IPCP]
  Let $X$ be an integer-valued random variable $X$ with $\Pr(X = k) =
  P_{k}$, $k \in \Bbb Z$. We say that $X$ has an integer-valued pseudo
  compound Poisson distribution if its characteristic function has the
  form
\begin{equation} \label{eq:def-IPCP}
\varphi (\theta) = \sum\limits_{k = -\infty }^{\infty}  P_{k} \mathrm{e}^{\mathrm{i}k\theta} = \exp \left\{ \sum \limits_{k =  -\infty}^{\infty} \alpha_{k} \lambda ( \mathrm{e}^{\mathrm{i}k\theta}  - 1)  \right\},
\end{equation}
where $\alpha_{0} = 0$, $\sum\limits_{k = - \infty }^\infty
\hspace{-0.5em} \alpha_{k} = 1$, $\sum\limits_{k = - \infty }^\infty
\hspace{-0.5em} {\left| {{\alpha _k}} \right|} < \infty$, $\alpha_{i}
\in \Bbb{R}$, and $\lambda > 0$.
\end{definition}

The early documental record of IPCP is in Paul L{\'e}vy 's monumental monograph of modern probability theory, see page 191 of \cite{levy37book}. Recent research about IPCP can be found in \cite{karymov05}.

As an example, consider $\varphi (\theta ) = \frac{1}{3} +
\frac{2}{3}{\mathrm{e}^{\mathrm{i}\theta }}$. We would like to write
$\varphi (\theta )$ as an exponential function. On our first attempt
we might try the Taylor series expansion for $\ln \left(\frac{1}{3} +
\frac{2}{3}z \right)$, but $\frac{1}{3} + \frac{2}{3}z$ vanishes at $z =
-1/2$, so this method cannot be employed. On our second attempt we
might look to the Fourier inversion formula for $\ln \left(\frac{1}{3}
  + \frac{2}{3} \mathrm{e}^{\mathrm{i}\theta}\right)$ since
$\frac{1}{3} + \frac{2}{3}\mathrm{e}^{\mathrm{i}\theta }$ has no
zeros. Let $f(\theta)$ be an integrable function on $[0,2\pi ]$. The
Fourier coefficients $c_{n}$ for $n \in {\Bbb Z}$ of $f(\theta )$ are
defined by
\[{c_n} = \frac{1}{{2\pi }}\int_0^{2\pi } {f(\theta)\, \mathrm{e}^{
    -\mathrm{i}n\theta } \, \mathrm{d}\theta},\quad n \in {\Bbb Z}.\]
In this example the Fourier coefficients of $\varphi (\theta )$ are
${c_n} = \frac{1}{{2\pi }}\int_0^{2\pi } {\ln \left( \frac{1}{3} +
    \frac{2}{3}{\mathrm{e}^{\mathrm{i}\theta }}\right){\mathrm{e}^{ -
      \mathrm{i}n\theta }}} \mathrm{d}\theta$, for $n \in {\Bbb Z}$,
and \[ \mathop {\lim }\limits_{M \to \infty } \sum\limits_{k = - M}^M
{{c_k}{\mathrm{e}^{\mathrm{i}k\theta }}} = f(\theta ) = \ln
\left(\frac{1}{3} + \frac{2}{3}{\mathrm{e}^{\mathrm{i}\theta
    }}\right).\]

\textbf{Remark 3} This example illustrates that a discrete r.v. may be signed integer-valued ID r.v. but not be signed discrete ID r.v.!

We enlisted $\mathrm{Maple}^{\circledR}$ to compute the $c_{n}$'s and
graphed them in Figure 1. Here, $c_{0} = \lambda$ and
${c_n} = \lambda \alpha_{n}$ for $n \ne 0$. The L{\'e}vy-Wiener
theorem guarantees that
$\sum\limits_{n = - \infty }^\infty \hspace{-0.75em}\left| \alpha_{n}
\right| < \infty$.

\begin{figure}[!ht]
\centering
\includegraphics[height=0.45\textwidth=1]{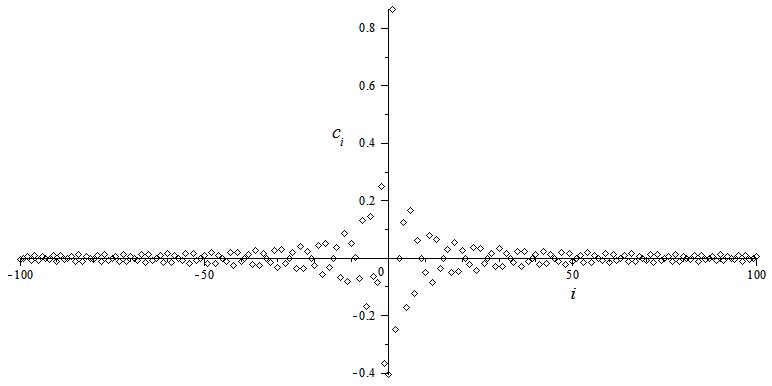}
\caption{Fourier coefficients for $\ln \left(\frac{1}{3} + \frac{2}{3}{\mathrm{e}^{\mathrm{i}\theta }}\right) = \sum c_{k} \mathrm{e}^{\mathrm{i}k\theta}$.}
\end{figure}

In the following, we
list three equivalent characterizations of an IPCP distributed
r.v.~with characteristic function
$\exp \left\{ {\sum\limits_{k = - \infty }^\infty {{\alpha _k}}
    \lambda ({{\rm{e}}^{{\rm{i}}k\theta }} - 1)} \right\}$
in \eqref{eq:def-IPCP}. The axiomatic derivations of IPCP
distributions can be obtained from any one of these characterizations.

\begin{corollary}

\begin{enumerate}[label={\upshape($\arabic*^{\circ}$)}]
\item {\upshape Signed compound Poisson:} Assume that $N$ is Poisson
  distributed with
  $\Pr(N = i) = \frac{\lambda^{i}}{i!}  \mathrm{e}^{-\lambda},(\lambda
  \in {\rm{R}})$,
  denoted by $N \sim Po(\lambda )$. Then, $X$ can be decomposed as
  \[X = Y_{1} + Y_{2} + \cdots + Y_{N},\] where the $Y_{i}$ are
  i.i.d.~integer-valued signed r.v.'s with signed probability density
  $\Pr( Y_{1} = k) = \alpha_{k}$, with $\sum\limits_{k = - \infty
  }^{\infty} \alpha_{k} = 1$, $\sum\limits_{k = - \infty }^{\infty}
  \left| \alpha_{k} \right| < \infty$ and $N$ independent of $Y_{i}$.

  \item {\upshape Sum of weighted signed Poisson:} $X$ can be decomposed as
    \[X = \sum \limits_{k = -\infty,\ k \ne 0}^{\infty} k N_{k}, \]
    where $N_{k}$ for $k \in \Bbb Z$ are independently signed Poisson
    distributed $N_{k} \sim Po(\alpha_{k} \lambda)$ with signed
    probability density $P({N_k} = n) = \frac{{{{(\lambda {\alpha _k})}^n}}}{{n!}}{e^{ - {\alpha _k}\lambda }}$, where $\alpha_{k}$ and $\lambda
    $ are defined in $1^{\circ}$.

  \item {\upshape Difference of discrete pseudo compound Poisson:} Let
    $\exp \left\{ \sum\limits_{i = 1}^\infty {\alpha _i^ + {\lambda _
          + }({z^i}} - 1) \right\}$ and $\exp \left\{ \sum\limits_{i =
        1}^\infty {\alpha _{ - i}^ - {\lambda _ - }({z^i}} - 1)
    \right\} $ be the p.g.f.'s of discrete r.v.'s $X_{+}$ and $X_{-}$,
    respectively. $\alpha_{k}$ and $\lambda
    $ are defined in $1^{\circ}$. Then $X$ can be seen as a difference of two
    independent r.v.'s \[ X = X_{+} - X_{-},\] where $\lambda_{+} = \sum
    \limits_{i = 1}^{\infty} \alpha_{i}$ and $\alpha_{i}^{+} =
    \alpha_{i}/\lambda_{+}$; also $\lambda_{-} = \sum \limits_{i =
      1}^{\infty}\alpha_{-i}$ and $\alpha _{ - i}^ - = \alpha
    _{-i}/\lambda_{-}$.

\end{enumerate}
\end{corollary}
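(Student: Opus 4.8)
The plan is to establish all three representations through a single characteristic-function computation: for each of the constructions $1^\circ$--$3^\circ$ I will exhibit $\E\mathrm{e}^{\mathrm{i}\theta X}$ and verify that it coincides with the IPCP characteristic function in \eqref{eq:def-IPCP}. Since the characteristic function of a signed integer-valued random variable of finite total variation determines its signed density uniquely (the weights being recovered by Fourier inversion), this one equality handles both directions at once: an IPCP random variable factors in exactly each of these three ways, and conversely each construction yields an IPCP random variable. It therefore suffices to read off the parameters $\{\alpha_k\}$ and $\lambda$ in each case.

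For $1^\circ$, conditioning on $N$ and using independence of the $Y_i$ gives $\E\mathrm{e}^{\mathrm{i}\theta X}=\E\bigl[\psi(\theta)^{N}\bigr]$, where $\psi(\theta)=\E\mathrm{e}^{\mathrm{i}\theta Y_1}=\sum_k\alpha_k\mathrm{e}^{\mathrm{i}k\theta}$ is the characteristic function of a single signed summand. Because $N\sim Po(\lambda)$ has probability generating function $\E[s^{N}]=\mathrm{e}^{\lambda(s-1)}$, substituting $s=\psi(\theta)$ produces $\exp\{\lambda(\psi(\theta)-1)\}$, and the normalisation $\sum_k\alpha_k=1$ rewrites this as $\exp\{\sum_k\alpha_k\lambda(\mathrm{e}^{\mathrm{i}k\theta}-1)\}$, which is \eqref{eq:def-IPCP}. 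For $2^\circ$, the signed Poisson variable $N_k\sim Po(\alpha_k\lambda)$ has generating function $\E[s^{N_k}]=\mathrm{e}^{\alpha_k\lambda(s-1)}$ (valid for any real parameter, since the defining series converges absolutely); putting $s=\mathrm{e}^{\mathrm{i}k\theta}$ and invoking independence of the $N_k$ gives $\E\mathrm{e}^{\mathrm{i}\theta X}=\prod_{k\neq0}\mathrm{e}^{\alpha_k\lambda(\mathrm{e}^{\mathrm{i}k\theta}-1)}=\exp\{\sum_{k\neq0}\alpha_k\lambda(\mathrm{e}^{\mathrm{i}k\theta}-1)\}$, and recalling $\alpha_0=0$ this is again \eqref{eq:def-IPCP}.

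For $3^\circ$, I would split the quasi-Lévy weights of the IPCP into their positive-index part (defining $X_+$) and negative-index part (defining $X_-$). Writing $z=\mathrm{e}^{\mathrm{i}\theta}$ in the p.g.f.~of $X_+$ gives its characteristic function $\exp\{\sum_{j\geq1}\alpha_j^+\lambda_+(\mathrm{e}^{\mathrm{i}j\theta}-1)\}$, while the $-X_-$ summand contributes the p.g.f.~of $X_-$ evaluated at $\mathrm{e}^{-\mathrm{i}\theta}$, namely $\exp\{\sum_{j\geq1}\alpha_{-j}^-\lambda_-(\mathrm{e}^{-\mathrm{i}j\theta}-1)\}$. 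Multiplying the two characteristic functions (independence of $X_+$ and $X_-$), using the parameter identifications $\alpha_j^+\lambda_+$ and $\alpha_{-j}^-\lambda_-$ with the positive- and negative-index weights of the IPCP, and re-indexing the negative part by $k=-j$, recombines both sums into $\exp\{\sum_{k=-\infty}^{\infty}\alpha_k\lambda(\mathrm{e}^{\mathrm{i}k\theta}-1)\}$, matching \eqref{eq:def-IPCP}.

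The hard part will not be any single computation but the rigorous justification that these manipulations survive the passage to \emph{signed} probabilities. Three points must be invoked explicitly, since the classical shortcuts (nonnegativity, $|\psi|\leq1$) are unavailable: first, that the signed Poisson law $Po(\mu)$, $\mu\in\mathbb{R}$, is a genuine signed measure of finite total variation $\mathrm{e}^{|\mu|-\mu}$, so that its generating function $\mathrm{e}^{\mu(s-1)}$ is legitimate; second, that the characteristic function of a sum of independent signed integer-valued random variables equals the product of the individual characteristic functions, which holds because convolution of finite signed measures corresponds to multiplication of Fourier transforms; and third, convergence of the infinite product in $2^\circ$ and of the infinite sums in $1^\circ$ and $3^\circ$, which is delivered by $\sum_k|\alpha_k|<\infty$ together with the L\'evy-Wiener theorem guaranteeing that the resulting Fourier series is absolutely convergent. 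None of these is deep, but each is essential for the signed setting.
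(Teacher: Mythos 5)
Your proposal is correct and matches the paper's approach exactly: the paper's entire proof is the one-line remark that $(1^{\circ})$--$(3^{\circ})$ are ``easy to check by examining the characteristic function,'' which is precisely the computation you carry out (conditioning on $N$ for $1^{\circ}$, multiplying the signed Poisson factors for $2^{\circ}$, and splitting the weights by sign for $3^{\circ}$), with rather more care about the signed-measure justifications than the paper supplies. One minor point: the normalisation $\lambda_{+}=\sum_{i\ge 1}\alpha_{i}$, $\alpha_{i}^{+}=\alpha_{i}/\lambda_{+}$ in the statement of $3^{\circ}$ drops a factor of $\lambda$ relative to \eqref{eq:def-IPCP}, and your identification of $\alpha_{j}^{+}\lambda_{+}$ with the positive-index weights of the IPCP silently corrects this, which is the intended reading.
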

\begin{proof}
  It is easy to check $(1^{\circ})$ -- $(3^{\circ})$ by examining the
  characteristic function.
\end{proof}

Next, we discuss the if-and-only-if relationship between the
signed integer-valued ID and integer-valued pseudo compound Poisson
distributions. This equivalence also holds for integer-valued
infinitely divisible and integer-valued compound Poisson distributions.

\begin{definition}
  A characteristic function (or the integer-valued r.v.~$X$) is said
  to be \emph{signed integer-valued infinitely divisible} if for every $n \in
  \mathbb{N}$, $\varphi_{X}(\theta)$ is the $n$-power of some
  characteristic function with signed probability density, namely,
  \[ \varphi_{X}(\theta) = [\varphi_{X_{n}} (\theta)]^{n} =
  \left(\sum \limits_{k = -\infty}^{\infty} p_{k}^{(n)} \mathrm{e}^{k\mathrm{i}\theta }\right)^{n},\] where \[\sum \limits_{k = -\infty}^{\infty} p_{k}^{(n)} =
  1, \quad \sum\limits_{k = -\infty }^{\infty} \left| p_{k}^{(n)} \right| <
  \infty,\quad p_{k}^{(n)} \in \Bbb{R}.\]
\end{definition}

To obtain our characterization for signed integer-valued ID
distributions we need Prohorov's theorem for signed measures, the
proof of which being found in \cite{bogachev07}. Applying Prohorov's
theorem, we obtain a continuity theorem for characteristic functions
with signed probability densities. For more reading on the application
of Prohorov's theorem to signed measures, see Theorem 2.2 and Theorem
2.3 of \cite{baez93}.

\begin{lemma}[Continuity theorem for signed characteristic functions,
  \cite{baez93}]
  (i) ${\mu _n} \Rightarrow \mu $ if and only if ${\hat \mu _n}
  \Rightarrow \hat \mu $ a.e.~and $\{ {\mu _n}\} $ is bounded and
  tight; (ii) Let ${\mu _n}$ be a complex measure and ${\hat \mu _n}$ be
  the characteristic function of ${\mu _n}$. If ${\hat \mu _n} \to g \
  a.e.$ and $\{ {\mu _n}\} $ is bounded and tight, then there exists a
  signed measure such that $\hat \mu = g$ and ${\mu _n} \Rightarrow
  g$.
\end{lemma}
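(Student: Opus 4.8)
The plan is to run the classical ``subsequence plus uniqueness'' argument behind the L{\'e}vy continuity theorem, but with Prohorov's theorem for signed measures (Lemma~\ref{lem:prohorov}) in place of its probabilistic counterpart and with the injectivity of the Fourier transform on finite signed measures playing the role of the uniqueness theorem for characteristic functions. The two halves of (i) and the statement (ii) all reduce to the same skeleton, so I would prove the necessity direction of (i) first and then reuse it.

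For necessity in (i), assuming $\mu_n \Rightarrow \mu$, the convergence $\hat\mu_n \to \hat\mu$ pointwise (hence a.e.) is immediate: test weak convergence against the bounded continuous functions $x \mapsto \cos(\theta x)$ and $x \mapsto \sin(\theta x)$ for each fixed $\theta$. To obtain boundedness and tightness, I would observe that a weakly convergent sequence is relatively sequentially compact --- every sequence drawn from the range $\{\mu_n\}$ admits a weakly convergent subsequence, since it either contains a genuine subsequence of $(\mu_n)$ or repeats some value infinitely often --- so condition (i) of Lemma~\ref{lem:prohorov} holds and forces $\{\mu_n\}$ to be uniformly tight and bounded in the variation norm. (Variation-norm boundedness can alternatively be read off from the uniform boundedness principle applied to the functionals $f \mapsto \int f\,d\mu_n$ on $C_0(\mathbb{R})$.)

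For the sufficiency half of (i) and for (ii) I would argue as follows. Given $\hat\mu_n \to g$ a.e.\ together with boundedness and tightness, Lemma~\ref{lem:prohorov} makes $\{\mu_n\}$ relatively sequentially compact, so any subsequence has a further subsequence $\mu_{n_k} \Rightarrow \nu$ for some finite signed (or complex) measure $\nu$. By the necessity direction just established, $\hat\mu_{n_k} \to \hat\nu$ everywhere; comparing with $\hat\mu_{n_k} \to g$ a.e.\ gives $\hat\nu = g$ a.e. Since $\hat\nu$ (and, in case (i), $\hat\mu$) is continuous, this a.e.\ equality upgrades to equality everywhere, and the uniqueness theorem for the Fourier transform of a finite signed measure then identifies the weak limit uniquely: in case (i) one gets $\nu = \mu$, and in case (ii) one exhibits a signed measure --- call it $\mu$ --- with $\hat\mu = g$. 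Because every weakly convergent subsequence is thereby pinned to the same limit $\mu$, relative compactness upgrades this to convergence of the full sequence, $\mu_n \Rightarrow \mu$, which is the assertion.

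I expect the crux to be the uniqueness step: one must invoke that a finite signed (complex) Borel measure on $\mathbb{R}$ is determined by its Fourier transform, so that two distinct subsequential weak limits cannot coexist. This is the signed-measure analogue of the uniqueness of characteristic functions and is exactly what closes the subsequence argument; it can be supplied from the uniqueness of the L{\'e}vy--Khinchine representation with signed L{\'e}vy measure noted earlier (see \cite{sato13}) or directly from Fourier inversion. The only other delicate point is the passage from a.e.\ equality of the transforms to equality everywhere, which is handled cleanly by the continuity of the Fourier transform of a finite signed measure.
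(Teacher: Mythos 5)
Your argument is correct and is essentially the approach the paper itself relies on: the paper gives no proof of this lemma (it is quoted from Baez-Duarte, Theorems 2.2 and 2.3), but its proof of the parallel continuity theorem for signed p.g.f.'s follows exactly your skeleton --- necessity by testing weak convergence directly, sufficiency by extracting weakly convergent sub-subsequences via Prohorov's theorem for signed measures and pinning every subsequential limit to the same measure by a uniqueness theorem (there the identity theorem for power-series coefficients, here the injectivity of the Fourier--Stieltjes transform on finite signed measures). The one point to tidy is your closing aside: the uniqueness of the L\'evy--Khinchine representation with signed L\'evy measure is not the relevant uniqueness statement; what actually closes the argument is the fact (which you also correctly invoke) that a finite signed or complex Borel measure on $\mathbb{R}$ is determined by its Fourier transform, combined with the observation that $g$ agrees a.e.\ with the continuous function $\hat\nu$, so any two subsequential weak limits have everywhere-equal transforms and hence coincide.
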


\begin{lemma}\label{lem:quasi-ID-CF-not-zero}
  Let ${\varphi _X}(\theta )$ be a signed integer-valued infinitely divisible
  characteristic function. Then $\varphi_{X}(\theta) \ne 0$ for all
  $\theta$.
\end{lemma}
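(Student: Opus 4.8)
The plan is to transcribe the argument of Lemma~\ref{lem:quasi-ID-pgf-not-zero} from p.g.f.'s to characteristic functions, using the continuity theorem for signed characteristic functions of \cite{baez93} in place of its p.g.f.~counterpart and Lemma~\ref{lem:prohorov} for the tightness input. Fix $n \in \mathbb{N}$ and write $\varphi_X(\theta) = [\varphi_{X_n}(\theta)]^n$, where $\varphi_{X_n}$ is the characteristic function of the signed point measure $\mu_n$ with density $\{p_k^{(n)}\}$; in particular $\varphi_{X_n}$ is continuous and $\varphi_{X_n}(0) = \sum_k p_k^{(n)} = 1$. The naive step of setting $g(\theta) = \lim_{n} \varphi_{X_n}(\theta)$ runs into a genuine difficulty that is absent in the real p.g.f.~setting: since $\varphi_X(\theta)$ is complex, $\varphi_{X_n}(\theta)$ is only \emph{some} $n$-th root of $\varphi_X(\theta)$, and its argument $\tfrac{1}{n}\arg\varphi_X(\theta) + \tfrac{2\pi}{n}\mathbb{Z}$ need not converge.

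To sidestep the phase ambiguity I would invoke Feller's classical device and pass to the symmetrization. Set
\[
\psi_n(\theta) = |\varphi_{X_n}(\theta)|^2 = \varphi_{X_n}(\theta)\,\overline{\varphi_{X_n}(\theta)},
\]
which is the characteristic function of the signed measure $\nu_n = \mu_n * \widetilde{\mu}_n$ (the convolution of $\mu_n$ with its reflection), a legitimate signed c.f.~with $\nu_n(\mathbb{Z}) = 1$ and $\|\nu_n\| \le \|\mu_n\|^2 < \infty$. Now there is no branch issue, since $\psi_n(\theta) = |\varphi_X(\theta)|^{2/n}$, so that
\[
\psi_n(\theta) \longrightarrow g(\theta) :=
\begin{cases}
1, & \varphi_X(\theta) \ne 0,\\
0, & \varphi_X(\theta) = 0,
\end{cases}
\]
pointwise in $\theta$.

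Next I would feed $\{\psi_n\}$ into the continuity theorem for signed characteristic functions. Exactly as in the paragraph following Lemma~\ref{lem:prohorov}, the symmetrized measures $\{\nu_n\}$ are bounded in variation norm and uniformly tight, so Prohorov's theorem applies and the continuity theorem yields a signed measure $\nu$ with $\widehat{\nu} = g$. In particular $g$ is continuous and $g(0) = 1$. Since $\varphi_X$ is $2\pi$-periodic, $g$ is a continuous function on the connected set $[0,2\pi]$ taking values only in $\{0,1\}$; such a function is constant, and $g(0) = 1$ forces $g \equiv 1$. Hence the case $\varphi_X(\theta) = 0$ never occurs, which is the claim.

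I expect the crux to be the uniform control of $\{\nu_n\}$ --- boundedness in the variation norm and uniform tightness --- since in the signed setting the total variation $\|\mu_n\|$ is no longer pinned to $1$ as it would be for honest probability measures, and it is exactly this uniform control that licenses the continuity theorem. The remaining ingredients (the symmetrization identity, the pointwise limit, and the ``continuous $+$ two-valued $+$ connected $\Rightarrow$ constant'' finish) are routine, the symmetrization being the device that renders the complex-valued root harmless.
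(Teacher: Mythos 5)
Your proposal is correct and follows essentially the same route as the paper's own proof: the paper likewise passes to the symmetrization $\left| \varphi_{X_n}(\theta) \right|^2 = \E\, \mathrm{e}^{\mathrm{i}\theta (X_n - Y_n)}$ (with $Y_n$ an independent copy of $X_n$) to avoid the complex root ambiguity, takes the pointwise limit $\psi(\theta) \in \{0,1\}$, and applies the continuity theorem for signed characteristic functions together with continuity and $\psi(0)=1$ to force $\psi \equiv 1$. Your explicit attention to the uniform tightness and variation-norm bound of the symmetrized measures is a point the paper leaves implicit, but it is not a different argument.
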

\begin{proof}
  Given $n \in \mathbb{N}$ and ${\varphi _X}(\theta )$, if $X$ is
  signed integer-valued infinitely divisible then $\varphi_{X}(\theta) = \left( \left|
      \varphi_{{X_n}} (\theta) \right|^{2} \right)^{\frac{1}{2n}}$
  where ${\varphi _{{X_n}}}(\theta )$ is a characteristic function
  with signed probability. Note that
  \[{\left| {{\varphi _{{X_n}}}(\theta )} \right|^2} = {\varphi
    _{{X_n}}}(\theta ){{\bar \varphi }_{{Y_n}}}(\theta ) = \E
  \mathrm{e}^{\mathrm{i}\theta {X_n}} \E
  {\mathrm{e}^{-\mathrm{i}\theta {Y_n}}} = \E \mathrm{e}^{\mathrm{i}
    \theta ({X_n} - {Y_n})},\] where ${X_n}\mathop = \limits^d {Y_n}$,
  is also a characteristic function with signed probability. Let
\[\psi (\theta ) = \mathop {\lim }\limits_{n \to \infty } {\left| {{\varphi _{{X_n}}}(\theta )} \right|^2} = \begin{cases}
  1, & \mbox{if $\varphi_{X}(\theta) \ne 0$,} \\
  0, & \mbox{if $\varphi_{X}(\theta) = 0$.}
\end{cases} \]
The function ${\varphi _X}(\theta )$ is continuous in $\theta$
for all $\theta $ and ${\varphi _X}(0) = 1$, so too is $\left|
  \varphi_{X_{n}} (\theta ) \right|^{2} =
[\varphi_{X}(\theta)]^{2n}$. By the continuity theorem for signed
characteristic functions, $\psi (\theta )$ is a signed characteristic
function and $\psi (0) = 1$. Since ${\varphi _X}(\theta )$ is
continuous for all $\theta $, so is $\psi (\theta )$. Hence, $\psi
(\theta) = 1$ for all $\theta $. The above statements show that
$\varphi_{X}(\theta ) \ne 0$ for every $\theta \in \Bbb{R}$.
\end{proof}

The continuity theorem for signed characteristic functions can be used
to deduce the general theorem for signed integer-valued ID.

\begin{theorem}[Characterization for signed integer-valued ID] \label{thm:char-iv-quasi-ID}
  An integer-valued distribution is signed integer-valued infinitely divisible if
  and only if it is an integer-valued pseudo compound Poisson
  distribution.
\end{theorem}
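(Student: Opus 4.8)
The plan is to rerun the proof of Theorem~\ref{thm:char-quasi-ID} on the unit circle, with characteristic functions in place of p.g.f.'s, so that the argument again splits into an application of the L{\'e}vy-Wiener theorem (Lemma~\ref{thm:levy-wiener}) and of the non-vanishing result (Lemma~\ref{lem:quasi-ID-CF-not-zero}); the one genuinely new ingredient, absent from the $\mathbb{N}$-valued setting, is a winding-number argument needed to license the logarithm. For sufficiency, assume $X$ is IPCP, so by \eqref{eq:def-IPCP} we have $\varphi_X(\theta)=\exp\{\sum_k\alpha_k\lambda(\mathrm{e}^{\mathrm{i}k\theta}-1)\}$ with $\sum_k|\alpha_k|<\infty$. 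For each $n$ the exponent of $[\varphi_X(\theta)]^{1/n}=\exp\{\sum_k\frac1n\alpha_k\lambda(\mathrm{e}^{\mathrm{i}k\theta}-1)\}$ is itself an absolutely convergent Fourier series, so applying Lemma~\ref{thm:levy-wiener} with the entire function $H=\exp$ (regular on the whole plane) shows that $[\varphi_X]^{1/n}=\sum_k p_k^{(n)}\mathrm{e}^{\mathrm{i}k\theta}$ satisfies $\sum_k|p_k^{(n)}|<\infty$; evaluating at $\theta=0$ gives $\sum_k p_k^{(n)}=1$. Hence $X$ is signed integer-valued ID.

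For necessity, Lemma~\ref{lem:quasi-ID-CF-not-zero} already yields $\varphi_X(\theta)\ne0$ for all $\theta$, so the closed curve $C=\{\varphi_X(\theta):\theta\in[0,2\pi]\}$ avoids the origin. I would then invoke Lemma~\ref{thm:levy-wiener} with $H=\log$ to obtain an absolutely convergent expansion $\log\varphi_X(\theta)=\sum_k c_k\mathrm{e}^{\mathrm{i}k\theta}$. The identity $\overline{\varphi_X(\theta)}=\varphi_X(-\theta)$ (from $P_k\in\mathbb{R}$) gives $\overline{\log\varphi_X(\theta)}=\log\varphi_X(-\theta)$, which on comparing Fourier coefficients forces every $c_k$ to be real, while $\varphi_X(0)=1$ forces $\sum_k c_k=0$, so that $\log\varphi_X(\theta)=\sum_{k\ne0}c_k(\mathrm{e}^{\mathrm{i}k\theta}-1)$. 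Setting $\lambda=\sum_{k\ne0}c_k$, $\alpha_k=c_k/\lambda$ for $k\ne0$ and $\alpha_0=0$ then recovers exactly the IPCP form \eqref{eq:def-IPCP}, so $X$ is IPCP.

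The step I expect to be the main obstacle is the passage to $H=\log$. In the p.g.f.~case "no zeros in $|z|\le1$" forces, via the argument principle, the boundary curve to have winding number $0$, so a single branch of $\log$ is regular on a neighborhood of $C$ and Lemma~\ref{thm:levy-wiener} applies verbatim. On the circle, by contrast, a non-vanishing $\varphi_X$ may still encircle the origin, in which case no branch of $\log$ is regular near $C$ and $\log\varphi_X$ acquires an $\mathrm{i}m\theta$ drift that is not a Fourier series. The remedy is to deduce the winding number $m$ of $C$ directly from the hypothesis: writing $\varphi_X=[\varphi_{X_n}]^n$ with each $\varphi_{X_n}$ a continuous $2\pi$-periodic characteristic function that is non-vanishing (again by Lemma~\ref{lem:quasi-ID-CF-not-zero}) and hence of some integer winding number $w_n$, one gets $m=n\,w_n$ for every $n$, so $n\mid m$ for all $n$ and therefore $m=0$. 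With $m=0$ the logarithm is single-valued near $C$, the expansion above is legitimate, and it only remains, exactly as in the discrete case of Theorem~\ref{thm:char-quasi-ID}, to absorb the constant into the normalization $\sum_k\alpha_k=1$, $\lambda>0$ of \eqref{eq:def-IPCP}.
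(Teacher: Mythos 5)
Your proof is correct and, in outline, follows the same route as the paper's: sufficiency is essentially the paper's application of the L\'evy--Wiener theorem with $H=\exp$ to the absolutely convergent exponent, and necessity combines Lemma~\ref{lem:quasi-ID-CF-not-zero} with L\'evy--Wiener applied to $H=\log$. The genuine difference is your winding-number step, and it is not a cosmetic addition: the paper's necessity argument simply asserts that any non-vanishing characteristic function is the characteristic function of an IPCP distribution, which is false on $\mathbb{Z}$ --- a non-vanishing closed curve $\{\varphi_X(\theta):\theta\in[0,2\pi]\}$ may encircle the origin, in which case no branch of $\log$ is regular on a neighbourhood of the curve, $\log\varphi_X(\theta)$ acquires a drift $\mathrm{i}m\theta$, and no representation of the form \eqref{eq:def-IPCP} exists. (Take $\varphi(\theta)=\mathrm{e}^{\mathrm{i}\theta}$, or indeed the paper's own Remark~3 example $\frac{1}{3}+\frac{2}{3}\mathrm{e}^{\mathrm{i}\theta}$, whose image is the circle of radius $2/3$ centred at $1/3$ and hence winds once around $0$; contrary to what Remark~3 suggests, that function is neither IPCP nor signed integer-valued ID.) Your observation that the hypothesis $\varphi_X=[\varphi_{X_n}]^n$ forces the winding number $m$ to satisfy $n\mid m$ for every $n$, hence $m=0$, is exactly the ingredient needed to make the necessity direction sound, and your reduction of the resulting real absolutely convergent series $\sum_k c_k\mathrm{e}^{\mathrm{i}k\theta}$ with $\sum_k c_k=0$ to the normalised form \eqref{eq:def-IPCP} is fine. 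The only loose end is the positivity of $\lambda=\sum_{k\ne 0}c_k=-c_0$: since $c_0$ is real it equals $\frac{1}{2\pi}\int_0^{2\pi}\log\left|\varphi_X(\theta)\right|\,\mathrm{d}\theta\le 0$, so $\lambda\ge 0$, with $\lambda=0$ occurring only for the distribution degenerate at $0$ --- the same corner case the paper itself leaves unaddressed in both Theorem~\ref{thm:char-quasi-ID} and Theorem~\ref{thm:char-iv-quasi-ID}.
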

\begin{proof}
  Sufficiency: For every $n \in \mathbb{N}$, ${\varphi _X}(\theta )$,
  if $X$ is IPCP distributed, then \[{[{\varphi _X}(\theta
    )]^{\frac{1}{n}}} = \exp \left\{\sum\limits_{k = - \infty }^\infty
    {\frac{1}{n}{\alpha _k}\lambda ({\mathrm{e}^{k\mathrm{i}\theta }}} - 1) \right\}
  {\rm{ }} \buildrel \Delta \over = \sum\limits_{k = - \infty }^\infty
  {p_k^{(n)}} {\mathrm{e}^{k\mathrm{i}\theta }}.\] By the L{\'e}vy-Wiener theorem we
  have $\sum\limits_{k = - \infty }^\infty {\left| {p_k^{(n)}}
    \right|} < \infty $. Hence, $X$ is signed integer-valued ID.

  Necessity: Lemma \ref{lem:quasi-ID-CF-not-zero} says that the characteristic function of a
signed integer-valued ID r.v.~$X$ never vanishes. Judging from the
  L{\'e}vy-Wiener theorem, any characteristic function with no zeros
  is the characteristic function of an IPCP distribution. Therefore
  $X$ is IPCP distributed.
\end{proof}

\section{Some bizarre properties of DCP related to signed r.v.}

\cite{ruzsa83} proves a theorem related to signed random variables and
\cite{szekely05} gives the following result.
\begin{proposition}[Construction of signed random variables] \label{prop:construct-sign-rv}
  If $f \in {L^1}$ and $\smallint f\,\mathrm{d}\omega > 0$, then one
  can find a $g \in L^{1}_{+}$ with $g\not \equiv 0$ such that $f * g
  \in L{^1}_{+}$, where $L^{1}_{+}$ is the set of all non-negative
  integrable functions. Moreover, we can choose $g$ such that its
  Fourier transform is always positive.
\end{proposition}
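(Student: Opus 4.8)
The plan is to separate the two demands on $g$. First I would produce \emph{some} non-negative $g\in L^1_+$, $g\not\equiv 0$, with $f*g\ge 0$, and only afterwards upgrade it to one whose Fourier transform is everywhere positive. The upgrade is clean and I would carry it out by a convolution device: given any admissible $g$, put $\tilde g(x)=g(-x)$ and $G=g*\tilde g$. Then $G\ge 0$ and $G\not\equiv 0$ as a convolution of non-negative functions, and crucially $f*G=(f*g)*\tilde g\ge 0$, being the convolution of the two non-negative functions $f*g$ and $\tilde g$. Moreover $\widehat G=\hat g\,\overline{\hat g}=|\hat g|^{2}\ge 0$, which is strictly positive wherever $\hat g$ does not vanish. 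Hence if the $g$ I build lies in a family with explicitly non-vanishing transform (a Poisson/Laplace-type kernel, for which $\hat g>0$ is immediate), the ``moreover'' comes for free, and the whole problem reduces to the bare existence statement.

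For that existence statement the governing observation is that $\smallint f\,\mathrm d\omega=\hat f(0)=:c>0$ is exactly the \emph{bulk} of the convolution. Writing $(f*g)(x)=g(x)\smallint f(y)\,\tfrac{g(x-y)}{g(x)}\,\mathrm dy$ and splitting $\tfrac{g(x-y)}{g(x)}=1+\big(\tfrac{g(x-y)}{g(x)}-1\big)$, the leading term is $c\,g(x)>0$ while the remainder is an error I must dominate. The idea is therefore to choose $g\ge 0$ \emph{broad and slowly varying}, so that $g(x-y)/g(x)\to 1$ and the error integral is driven below $c$ uniformly in $x$; the strict positivity of $c$ is precisely the slack that lets the error be absorbed, and $\hat f$ being continuous (Riemann--Lebesgue) is what legitimises treating $c$ as the dominant contribution.

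The hard part will be controlling this error \emph{uniformly in $x$ at infinity}. For a merely integrable $f$, with no moment or tail hypothesis, a fixed-shape kernel fails: setting $W(y):=\sup_x g(x-y)/g(x)$, this quantity grows with $|y|$ and can be overwhelmed by a heavy (yet integrable) negative tail of $f$---a Gaussian, or even a Laplace kernel, is defeated by an $f$ whose negative part decays only polynomially. The remedy I would use is to \emph{tailor the decay of $g$ to $f$}: since $f\in L^1$ there is a weight $w(y)\to\infty$ with $\smallint|f(y)|\,w(|y|)\,\mathrm dy<\infty$, and I would pick a non-negative $g$ whose tail is heavy enough that $W$ grows strictly more slowly than $w$. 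Then the tail contribution obeys $\smallint_{|y|>R}|f(y)|\,\tfrac{g(x-y)}{g(x)}\,\mathrm dy\le \sup_{|y|>R}\tfrac{W(y)}{w(y)}\,\smallint|f|\,w\to 0$ as $R\to\infty$, uniformly in $x$, while the $|y|\le R$ part is made small by the slow variation of $g$. Together they fall below $c$, yielding $f*g\ge 0$. Engineering a single integrable $g$ that is simultaneously slowly varying and heavy-tailed enough to beat the prescribed $w$ is the genuinely delicate construction, and it is where I expect the real effort to go.

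Finally I would dispatch the routine points: $g\not\equiv 0$ and $f*g\in L^1$ (from $\|f*g\|_1\le\|f\|_1\|g\|_1$) are immediate, the identity $\smallint f\,\mathrm d\omega=\hat f(0)$ underwrites the bulk heuristic, and the positivity of the Fourier transform is delivered either by the explicit kernel or by the $G=g*\tilde g$ reduction of the first paragraph. The only estimate carrying weight is the uniform tail bound above, so I would organise the write-up around constructing the $f$-adapted kernel and verifying that estimate, treating everything else as bookkeeping.
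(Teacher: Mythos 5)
The paper itself offers no proof of this proposition; it is quoted from \cite{ruzsa83} and \cite{szekely05}. So your proposal has to stand on its own, and as written it does not: the step you defer as ``the genuinely delicate construction'' --- an integrable $g>0$ that is uniformly slowly varying on bounded shifts and whose ratio function $W(y):=\sup_x g(x-y)/g(x)$ is dominated by the de la Vall\'ee Poussin weight $w$ of $f$ --- is not merely delicate, it is impossible for general $f\in L^1$. Fix any $x_0$ with $g(x_0)>0$ and take $x=x_0+y$ in the supremum: $W(y)\ge g(x_0)/g(x_0+y)$, so the requirement $W(y)\le Cw(|y|)$ forces $g(x_0+y)\ge g(x_0)/(Cw(|y|))$, and integrating in $y$ gives $\left\| g\right\|_1\ge \frac{g(x_0)}{C}\int \frac{\mathrm{d}y}{w(|y|)}$. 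Thus your scheme can only work when $1/w$ is integrable, i.e.\ when $w$ grows essentially at least linearly. But for heavy-tailed $f$ every admissible weight is far slower than that: take $|f(y)|=(y\log^2 y)^{-1}$ for $y>e$ (plus a positive bump so that $\smallint f\,\mathrm{d}\omega>0$); if $\smallint |f|w<\infty$ then Cauchy--Schwarz applied to $\frac{1}{y\log y}=\bigl(\frac{w(y)}{y\log^2 y}\bigr)^{1/2}\bigl(\frac{1}{yw(y)}\bigr)^{1/2}$ yields $(\log\log T)^2\le \smallint_e^T\frac{w}{y\log^2 y}\,\mathrm{d}y\cdot\smallint_e^T\frac{\mathrm{d}y}{yw(y)}$, so $\smallint \mathrm{d}y/w(y)=\infty$ and no integrable $g$ exists. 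Your own diagnosis of the obstacle (fixed kernels are defeated by heavy negative tails) is correct, but the remedy you propose runs into this integrability barrier rather than around it.

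Your plan does go through when $f$ has compact support or exponential tails (a scaled Laplace kernel $g_\sigma(x)=\tfrac{1}{2\sigma}\mathrm{e}^{-|x|/\sigma}$ handles the near field uniformly, with $W(y)=\mathrm{e}^{|y|/\sigma}$ absorbing the far field), so the real content of the theorem is exactly the passage from such $f$ to arbitrary $L^1$, which is the part your write-up would have to supply and currently cannot by this route; the argument of \cite{ruzsa83} is structurally different (it does not proceed by the pointwise domination $f*g\ge \tfrac{c}{2}g$). Two smaller points: the symmetrization $G=g*\tilde g$ only gives $\widehat G=|\hat g|^2\ge 0$, and for the bespoke heavy-tailed $g$ you would need, non-vanishing of $\hat g$ is not automatic, so the ``moreover'' clause is also not yet secured; and the identity $(f*g)(x)=g(x)\smallint f(y)\frac{g(x-y)}{g(x)}\,\mathrm{d}y$ presupposes $g>0$ everywhere, which should be stated since it is what makes $W$ finite in the first place.
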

\begin{proposition}[Fundamental theorem of negative
  probability] \label{prop:fund-thm-neg-prob} If a random variable $R$
  has a negative probability density, then there exist two other
  independent random variables $Y, Z$ with ordinary (not negative)
  distributions such that $R + Y = Z$ in distribution, where the
  operation $+$ is under `convolutional plus'. Thus $R$ can be seen as
  the `difference' of two ordinary random variables.
\end{proposition}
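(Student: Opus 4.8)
The plan is to obtain this as a direct consequence of Proposition \ref{prop:construct-sign-rv}, which has already done the analytically hard work. Let $f_R$ denote the signed probability density of $R$. By the definition of a signed distribution, $f_R \in L^1$ with $\int f_R\,\mathrm{d}\omega = 1 > 0$. This total-mass positivity is exactly the hypothesis required to invoke Proposition \ref{prop:construct-sign-rv}, so the entire proof reduces to a normalization argument around the convolution partner it supplies.

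First I would apply Proposition \ref{prop:construct-sign-rv} with $f = f_R$ to produce a function $g \in L^{1}_{+}$ with $g \not\equiv 0$ and $f_R * g \in L^{1}_{+}$ (and, if desired, with everywhere-positive Fourier transform). Since $g \geq 0$ and $g \not\equiv 0$, its total mass $c := \int g\,\mathrm{d}\omega$ is strictly positive and finite. Setting $f_Y := g/c$ then yields a bona fide probability density: $f_Y \geq 0$ and $\int f_Y\,\mathrm{d}\omega = 1$, so $f_Y$ is the law of an ordinary random variable $Y$, which I take to be independent of $R$.

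Next I would define $f_Z := f_R * f_Y = (f_R * g)/c$. By construction $f_R * g \geq 0$, so $f_Z \geq 0$; and by the convolution mass identity (Fubini),
\[\int f_Z\,\mathrm{d}\omega = \frac{1}{c}\left(\int f_R\,\mathrm{d}\omega\right)\left(\int g\,\mathrm{d}\omega\right) = \frac{1}{c}\cdot 1 \cdot c = 1,\]
so $f_Z$ is likewise a genuine probability density, defining an ordinary random variable $Z$. The identity $f_R * f_Y = f_Z$ states precisely that convolving the signed law of $R$ with the ordinary law of $Y$ produces the ordinary law of $Z$, i.e.\ $R + Y = Z$ in distribution under convolutional plus. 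Rearranging gives $R = Z - Y$ in distribution, exhibiting $R$ as the difference of two ordinary random variables.

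The only delicate point — and the place I expect a careless argument to stumble — is ensuring that a \emph{single} normalization yields two legitimate distributions at once. Non-negativity of $f_Y$ is immediate and non-negativity of $f_Z$ is handed to us by Proposition \ref{prop:construct-sign-rv}; what must be checked is that the same constant $c = \int g$ that normalizes $Y$ simultaneously normalizes $Z$, and this succeeds exactly because $\int f_R = 1$ forces $\int (f_R * g) = c$. Were the total mass of $R$ zero or negative, Proposition \ref{prop:construct-sign-rv} would not apply and no such decomposition could be extracted, so the positivity $\int f_R = 1 > 0$ is the structural hypothesis silently carrying the result.
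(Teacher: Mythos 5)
Your proposal is correct, and it follows the route the paper intends: the paper itself states this proposition without proof (citing Sz\'{e}kely, 2005), placing it immediately after Proposition \ref{prop:construct-sign-rv} precisely because the result is the normalized consequence of that construction, exactly as you argue. Your normalization bookkeeping (a single constant $c=\int g$ simultaneously making $f_Y$ and $f_Z=f_R*g/c$ probability densities, via $\int f_R=1$) is the only nontrivial step and you have it right.
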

This fact gives a new explanation for why some $\alpha_{k}$ may be
negative by Theorem \ref{thm:char-quasi-ID}. Let
$\varphi_{R}(\theta)$, $\varphi_{Y}(\theta)$, and
$\varphi_{Z}(\theta)$ be the corresponding characteristic functions in
Proposition \ref{prop:fund-thm-neg-prob}.

Then we have $\varphi_{R}(\theta)\varphi_{Y}(\theta) =
\varphi_{Z}(\theta)$. If $R$ is integer-valued quasi ID, then
$\varphi_{R}(\theta) = [\varphi_{R}^{(n)}(\theta)]^{n}$ and
$\varphi_{R}^{(n)}(\theta) \not \equiv 0$, so we have
\[\varphi _R^{(n)}(\theta )\sum\limits_{k = 1}^\infty  {{b_{nk}}} {{\text{e}}^{{\text{i}}k\theta }} = \sum\limits_{k = 1}^\infty  {{a_{nk}}} {{\text{e}}^{{\text{i}}k\theta }} \mbox{ and } \sum\limits_{k = 1}^\infty  {{b_{nk}}} {{\text{e}}^{{\text{i}}k\theta }}\not  \equiv 0,\]
 from Proposition
\ref{prop:construct-sign-rv}.  Hence,
\[{\varphi _R}(\theta ) = {[\varphi _R^{(n)}(\theta )]^n} = \frac{{{{\left( {\sum\limits_{k = 1}^\infty  {{a_{nk}}} {{\text{e}}^{{\text{i}}k\theta }}} \right)}^n}}}{{{{\left( {\sum\limits_{k = 1}^\infty  {{b_{nk}}} {{\text{e}}^{{\text{i}}k\theta }}} \right)}^n}}} = \frac{{\exp \left( {\sum\limits_{k = 1}^\infty  {v_{nk}'} {{\text{e}}^{{\text{i}}k\theta }}} \right)}}{{\exp \left( {\sum\limits_{k = 1}^\infty  {v_{nk}''} {{\text{e}}^{{\text{i}}k\theta }}} \right)}} = \exp \left\{ {\sum\limits_{k = 1}^\infty  {(v_{nk}' - v_{nk}'')} {{\text{e}}^{{\text{i}}k\theta }}} \right\},\]
  where $v_{nk}'$ and
$v_{nk}''$ are non-negative. They satisfy $\sum\limits_{k = 1}^\infty  {\left|v_{nk}' - v_{nk}'' \right|}  < \infty $
because $\sum\limits_{k = 1}^\infty  {{a_{nk}}} {{\text{e}}^{{\text{i}}k\theta }}$ and $\sum\limits_{k = 1}^\infty  {{b_{nk}}} {{\text{e}}^{{\text{i}}k\theta }}$ both have
no zero, and the L{\'e}vy-Wiener theorem makes sense.

For those r.v.'s $X$ whose characteristic function can be written as
$\varphi (\theta ) = \mathrm{e}^{\eta (\theta)}$, the
\emph{J{\o}rgensen set} $\Lambda (X)$ of positive reals (see
\cite{jorgensen97}) is defined by
\[ \Lambda (X) = \{ \lambda > 0: \mathrm{e}^{\lambda \eta (\theta )}
\mbox{ is a characteristic function}\}. \]

\cite{albeverio98} deduces the following two Propositions.
\begin{proposition}
  $\Lambda (X)$ is a semigroup containing 1. Furthermore, $\Lambda
  (X)$ is closed in ${\text{(0,}}\infty )$.
\end{proposition}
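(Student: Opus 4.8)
The plan is to establish the two assertions separately, treating the semigroup property as essentially formal and reserving the real work for closedness. First I would record that $1 \in \Lambda(X)$ is immediate: by hypothesis $\varphi(\theta) = e^{\eta(\theta)}$ is itself a characteristic function, which is exactly the statement $1 \in \Lambda(X)$. I would also note at the outset that $\eta(0) = 0$, since any characteristic function satisfies $\varphi(0) = 1$; this normalization will be used again in the closedness argument.

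For the semigroup property, I would take $s, t \in \Lambda(X)$ and let $U, V$ be \emph{independent} random variables whose characteristic functions are $e^{s\eta(\theta)}$ and $e^{t\eta(\theta)}$, which exist precisely because $s, t \in \Lambda(X)$. The characteristic function of $U + V$ is then the product $e^{s\eta(\theta)}\, e^{t\eta(\theta)} = e^{(s+t)\eta(\theta)}$, so $s + t \in \Lambda(X)$. Thus $\Lambda(X)$ is closed under addition and contains $1$, that is, it is a sub-semigroup of $(0,\infty)$.

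The substantive part is closedness in $(0,\infty)$, where the plan is to invoke L\'{e}vy's continuity theorem. Let $\lambda_n \in \Lambda(X)$ with $\lambda_n \to \lambda$ for some $\lambda > 0$, and set $\varphi_n(\theta) = e^{\lambda_n \eta(\theta)}$; each $\varphi_n$ is a characteristic function by definition of $\Lambda(X)$. Because $\varphi = e^{\eta}$ never vanishes, $\eta$ is a well-defined continuous function (the continuous branch of $\log\varphi$ normalized by $\eta(0) = 0$), so for each fixed $\theta$ we have $\lambda_n \eta(\theta) \to \lambda \eta(\theta)$ and hence $\varphi_n(\theta) \to e^{\lambda \eta(\theta)} =: \varphi_\lambda(\theta)$ pointwise. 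The limit $\varphi_\lambda$ is continuous at the origin with $\varphi_\lambda(0) = e^{\lambda \eta(0)} = 1$, so by L\'{e}vy's continuity theorem $\varphi_\lambda$ is itself a characteristic function, giving $\lambda \in \Lambda(X)$ and proving that $\Lambda(X)$ is closed.

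I expect the only delicate point to be justifying continuity at the origin of the limit function so that L\'{e}vy's theorem applies; this is handled by the normalization $\eta(0) = 0$, which forces $\varphi_\lambda(0) = 1$ and continuity there since $\eta$ is continuous. A secondary point worth a remark is that one must stay inside $(0,\infty)$: since $\lambda_n \to \lambda > 0$ the sequence never drifts to the excluded endpoint $0$, so no boundary issue arises and $\Lambda(X)$ is closed in the subspace topology of $(0,\infty)$.
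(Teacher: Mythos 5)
Your proof is correct: the semigroup property follows from multiplying the characteristic functions $e^{s\eta(\theta)}$ and $e^{t\eta(\theta)}$ of independent copies to get $e^{(s+t)\eta(\theta)}$, and closedness follows from L\'{e}vy's continuity theorem applied to $e^{\lambda_n\eta(\theta)}\to e^{\lambda\eta(\theta)}$, with continuity of the limit at the origin secured by the normalization $\eta(0)=0$ of the distinguished logarithm. The paper itself states this proposition without proof, citing Albeverio, Gottschalk and Wu (1998); your argument is the standard one for that result.
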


\begin{proposition}
  Let $S \subset (0,\infty )$ be a closed semigroup with ${\text{1}}
  \in {\text{S}}$. Then,
  \begin{enumerate}[label={\upshape(\arabic*)}]
  \item Either $S = (0,\infty )$ or $S \subset [\lambda,
    \infty)$ for some $\lambda > 0$.
  \item If the interior of $S$ is non-empty, then $S \supset [\lambda,
    \infty)$ for some $\lambda$, $0 < \lambda < \infty$.
  \end{enumerate}
\end{proposition}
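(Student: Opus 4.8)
The plan is to begin by pinning down the semigroup operation. In the J{\o}rgensen-set setting the relevant operation is addition: if $\lambda_1,\lambda_2\in\Lambda(X)$ then $\mathrm{e}^{\lambda_1\eta}$ and $\mathrm{e}^{\lambda_2\eta}$ are characteristic functions, so their product $\mathrm{e}^{(\lambda_1+\lambda_2)\eta}$ is again a characteristic function and $\lambda_1+\lambda_2\in\Lambda(X)$. Thus the abstract statement concerns a topologically closed set $S\subseteq(0,\infty)$ with $S+S\subseteq S$ and $1\in S$ (so in particular $\mathbb{N}\subseteq S$), and the whole argument will use only these properties.

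For part (1) I would set $m:=\inf S\geq 0$ and split on its sign. If $m>0$, then every element of $S$ is at least $m$, so $S\subseteq[m,\infty)$ and we take $\lambda=m$. The real content is the case $m=0$, where I claim $S=(0,\infty)$. Fix $x>0$ and $\varepsilon>0$; since $\inf S=0$ I can pick $s\in S$ with $0<s<\min\{\varepsilon,x\}$, and closure under addition gives $ns\in S$ for all $n\in\mathbb{N}$, a progression of gap $s$. Hence some $n$ satisfies $ns\leq x<(n+1)s$, so $|x-ns|<s<\varepsilon$. Letting $\varepsilon\to 0$ yields points of $S$ converging to $x$, and closedness of $S$ in $(0,\infty)$ forces $x\in S$; as $x$ was arbitrary, $S=(0,\infty)$.

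For part (2), nonempty interior furnishes an open interval $(a,b)\subseteq S$ with $0<a<b$; by closedness $[a,b]\subseteq S$, and since $S+S\subseteq S$ the Minkowski sums give $[na,nb]\subseteq S$ for every $n\geq 1$. The blocks $[na,nb]$ and $[(n+1)a,(n+1)b]$ overlap or abut exactly when $nb\geq(n+1)a$, i.e.\ when $n(b-a)\geq a$. Choosing $n_0=\lceil a/(b-a)\rceil$, the blocks with $n\geq n_0$ chain together without gaps, so their union is $[n_0a,\infty)$. Therefore $S\supseteq[\lambda,\infty)$ with $\lambda=n_0a$, and $0<\lambda<\infty$.

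The delicate point I expect is the density argument for the case $m=0$ in part (1): it is precisely the step that couples the algebraic structure (the progression $\{ns\}$ lying inside $S$) with the topology (closedness pulling the limit $x$ into $S$), in analogy with the classification of closed subgroups of $\mathbb{R}$. I would be careful there with the endpoint bookkeeping, using $s<x$ to guarantee that some multiple $ns$ lands in $(0,x]$, and likewise in part (2) with the open-versus-closed distinction, which is why I pass to the closed interval $[a,b]$ before iterating the addition. The hypothesis $1\in S$ plays no essential role beyond ensuring $S\neq\emptyset$ and normalizing the scale.
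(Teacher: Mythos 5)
Your proof is correct, but there is nothing in the paper to compare it against: the authors state this proposition without proof, attributing it to Albeverio, Gottschalk and Wu (1998), so the argument had to be supplied from scratch. Your reading of the unstated semigroup operation as addition is the right one (the product of the characteristic functions $\mathrm{e}^{\lambda_1\eta}$ and $\mathrm{e}^{\lambda_2\eta}$ is $\mathrm{e}^{(\lambda_1+\lambda_2)\eta}$), and both halves of your argument are sound: in part (1) the progression $\{ns\}$ with arbitrarily small gap $s$ combined with relative closedness in $(0,\infty)$ correctly forces $S=(0,\infty)$ when $\inf S=0$, and in part (2) the chaining of the Minkowski-sum blocks $[na,nb]$ once $n(b-a)\geq a$ is the standard device. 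The only point worth making explicit is the one you pass over by fiat: an interior point only guarantees some open interval inside $S$, and you should shrink it if necessary so that its left endpoint is strictly positive before invoking closedness to pass to $[a,b]$; with that said, the proof stands as a complete and correct supplement to the paper.
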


Here we show an example. Suppose there exists a negative $a_{k}$ in
\[ \varphi(\theta) = \exp \left\{ \lambda \sum \limits_{k =
    1}^{\infty} \alpha_{k} ( \mathrm{e}^{\mathrm{i}k\theta} - 1)
\right\}. \]
According to Corollary \ref{cor:neg-alpha} below, for $\lambda > 0$
sufficiently small, $\varphi (\theta )$ cannot be a characteristic
function. It is evident that ${[\varphi (\theta )]^\lambda }$ is still
a characteristic function since
$\lambda \in \mathbb{N}\setminus\{ 0\}$ implies
$\mathbb{N}\setminus\{ 0\} \subset \Lambda (X)$. Given the independent
convolution of a Bernoulli r.v.~and a Gamma distributed r.v.~$X + Y$,
where $X,Y$ is non-degenerate, \cite{nahla11} find the set
$\Lambda (X + Y)$. If $X$ and $Y$ are non-degenerate independent
r.v.'s which have respectively Bernoulli and negative binomial
distributions, \cite{letac02} find the set $\Lambda (X + Y)$.
\cite{nakamura13} gives a signed discrete infinitely divisible characteristic
function $\varphi (\theta)$ such that
${[\varphi (\theta )]^u},u \in \Bbb{R}$, is not a characteristic
function except for the non-negative integers. To find the
J{\o}rgensen set of a signed discrete ID characteristic function or a
quasi ID distribution defined by L{\'e}vy-Khinchine representation
with signed L{\'e}vy measure (\ref{eq:levy-khinchine}) is a research
problem.

Here we show that the
J{\o}rgensen set of signed discrete infinitely divisible Bernoulli r.v.'s $X$ is
$\mathbb{N}$.

\begin{corollary}
  Let ${G_X}(z) = p + (1 - p)z$ be the p.g.f., where $p > 0.5$. Then the
  J{\o}rgensen set is $\mathbb{N}$.
\end{corollary}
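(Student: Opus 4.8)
The plan is to prove the two inclusions separately, starting with the easy one. For the positive integers, I would simply note that $[G_X(z)]^{n}=[p+(1-p)z]^{n}$ is the p.g.f.~of a $\mathrm{Binomial}(n,1-p)$ law, i.e.\ the $n$-fold convolution of the Bernoulli law, hence a genuine probability distribution; so $n\in\Lambda(X)$ for every $n\in\{1,2,\dots\}$. This is the semigroup remark already recorded above (that the positive integers always belong to $\Lambda(X)$). Since $p>0.5$ forces $\varphi(\theta)=p+(1-p)\mathrm{e}^{\mathrm{i}\theta}$ to stay in the open right half-plane — its values trace the circle of radius $1-p$ about $p$, and $\mathrm{Re}\,\varphi(\theta)\ge 2p-1>0$ — the curve does not wind around the origin, so the branch $\eta(\theta)=\ln\varphi(\theta)$ with $\eta(0)=0$ is well defined, continuous, and itself $2\pi$-periodic. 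Consequently $\mathrm{e}^{\lambda\eta(\theta)}=[\varphi(\theta)]^{\lambda}$ is an unambiguous $2\pi$-periodic function for every $\lambda>0$.

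The substance is the reverse inclusion: no non-integer $\lambda>0$ lies in $\Lambda(X)$. Here I would expand by the binomial series, legitimate because $\bigl|\tfrac{1-p}{p}\mathrm{e}^{\mathrm{i}\theta}\bigr|=\tfrac{1-p}{p}<1$, to obtain the absolutely and uniformly convergent representation
\[
[\varphi(\theta)]^{\lambda}=p^{\lambda}\sum_{k=0}^{\infty}\binom{\lambda}{k}\Bigl(\tfrac{1-p}{p}\Bigr)^{k}\mathrm{e}^{\mathrm{i}k\theta}.
\]
Because the convergence is uniform, these are exactly the Fourier coefficients of $[\varphi(\theta)]^{\lambda}$. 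Now suppose, for contradiction, that $[\varphi(\theta)]^{\lambda}$ were a characteristic function. Being $2\pi$-periodic and equal to $1$ at $\theta=2\pi$, its underlying distribution is carried by the lattice $\mathbb{Z}$, so by uniqueness of Fourier coefficients the point masses coincide with $c_{k}=p^{\lambda}\binom{\lambda}{k}\bigl(\tfrac{1-p}{p}\bigr)^{k}$ and must all be non-negative.

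To finish I would exhibit a negative $c_{k}$. Writing $n=\lfloor\lambda\rfloor$, so that $0<\lambda-n<1$ since $\lambda\notin\mathbb{Z}$, the $n+1$ factors $\lambda,\lambda-1,\dots,\lambda-n$ appearing in $\binom{\lambda}{n+2}$ are all positive, while the single remaining factor $\lambda-(n+1)\in(-1,0)$ is negative; hence $\binom{\lambda}{n+2}<0$, and as $p^{\lambda}>0$ and $(\tfrac{1-p}{p})^{n+2}>0$ we get $c_{n+2}<0$, contradicting non-negativity. Thus $\lambda\notin\Lambda(X)$, and combining the two parts yields $\Lambda(X)=\{1,2,3,\dots\}$, the claimed $\mathbb{N}$. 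The sign computation for $\binom{\lambda}{n+2}$ is routine; the one genuinely delicate step — and where I expect the only real subtlety — is the justification that a $2\pi$-periodic characteristic function is supported on $\mathbb{Z}$ and that its Fourier coefficients are literally the point masses, since this is precisely what turns ``$c_{n+2}<0$'' into ``not a characteristic function.'' Note that this argument removes every non-integer at once, and in particular subsumes the earlier observation that sufficiently small $\lambda$ are excluded.
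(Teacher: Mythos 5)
Your proof is correct and follows essentially the same route as the paper's: expand $[p+(1-p)z]^{\lambda}$ by the binomial series and exhibit a negative coefficient, which is incompatible with being a characteristic function of a lattice distribution. Your version is in fact tidier — you pinpoint the negative coefficient at index $\lfloor\lambda\rfloor+2$ in one stroke, whereas the paper's two-case argument relies on a recursion for $c_{i+1}$ that is mistyped as printed, and you also make explicit the lattice-support/Fourier-uniqueness step that the paper leaves implicit.
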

\begin{proof}
  Trivially, $\mathbb{N}$ belongs to the J{\o}rgensen set of $X$. It
  remains to show that $r \in (0,\infty ]\backslash \mathbb{N} $
  cannot be in the J{\o}rgensen set. From Taylor's formula, we have
  \[{[p + (1 - p)z]^r} = {p^r} \left[1 + \sum\limits_{i = 1}^\infty
    {r(r - 1) \cdots (r - i + 1) \cdot \frac{1}{{i!}}{{
          \left(\frac{1 - p}{{p}} \right)}^i}{z^i}} \right] \buildrel
  \Delta \over = \sum\limits_{i = 0}^\infty {{c_i}{z^i}}. \] For each
  $r < i - 1$, $i = 2,3,\dots$, we consider two cases: (i) if there
  exists an $i$ such that ${c_i} < 0$, then the result follows; (ii)
  if there exists an $i$ such that ${c_i} > 0$, note that
  \[{c_i} = r(r - 1) \cdots (r - i + 1) \cdot \frac{1}{{i!}}{
    \left(\frac{1 - p}{p} \right)^i} \mbox{ and } {c_{i + 1}} = r(r
  - 1) \cdots (r - i) \cdot \frac{1}{{(i + 1)!}}{ \left(\frac{1 - p}{p} \right)^{i + 1}}.\] Then ${c_{i + 1}} =
  \frac{{{c_i}}}{{(i + 1)(r - i + 1)}} \left(\frac{1 - p}{p} \right)
  < 0$ since ${r - i + 1 < 0}$.
\end{proof}

\begin{corollary} \label{cor:neg-alpha}
  If there exists a negative $\alpha_{k}$ in \eqref{eq:DPCP}, then
  ${\varphi _X}(\theta ) = \exp \left\{ {\sum\limits_{k = 1}^\infty  {{\alpha _k}} \lambda ({{\text{e}}^{{\text{i}}k\theta }} - 1)} \right\}$ is not a characteristic function for $\lambda > 0$
  sufficiently small.
\end{corollary}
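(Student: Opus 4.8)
The plan is to exhibit, for $\lambda$ small, a genuinely negative coefficient in the Fourier expansion of $\varphi_X$, which forces $\varphi_X$ to fail to be the characteristic function of any ordinary (non-negatively weighted) integer-valued distribution. Since $\varphi_X(\theta)$ is obtained from the DPCP p.g.f.\ \eqref{eq:DPCP} by the substitution $z = \mathrm{e}^{\mathrm{i}\theta}$, its coefficient of $\mathrm{e}^{\mathrm{i}k\theta}$ is exactly the number $P_k$ given explicitly in \eqref{eq:DPCP-explicit}. So the whole argument reduces to tracking the sign of a single coefficient $P_k$, where $k$ is the index for which $\alpha_k < 0$.

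First I would isolate the dependence of $P_k$ on $\lambda$. Expanding $\exp\{\lambda\sum_{j\ge 1}\alpha_j z^j\} = \sum_{m\ge 0}\frac{\lambda^m}{m!}\bigl(\sum_{j\ge 1}\alpha_j z^j\bigr)^m$ and extracting the coefficient of $z^k$, the $m=0$ term contributes nothing for $k\ge 1$, the $m=1$ term contributes precisely $\alpha_k\lambda$, and every term with $m\ge 2$ carries a factor $\lambda^m$ with $m\ge 2$. Because each factor must supply at least one power of $z$, only $m\le k$ occur, so $\mathrm{e}^{\lambda}P_k$ is a genuine polynomial in $\lambda$ of degree $k$ whose lowest-order term is $\alpha_k\lambda$. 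In other words, $\mathrm{e}^{\lambda}P_k = \alpha_k\lambda + O(\lambda^2)$ as $\lambda \to 0^+$, matching the leading behaviour visible directly in \eqref{eq:DPCP-explicit}.

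Second, since $\alpha_k < 0$ by hypothesis, this leading term is strictly negative, and for $\lambda$ small enough it dominates the $O(\lambda^2)$ remainder; hence $P_k < 0$. The Fourier coefficients of the characteristic function of an ordinary integer-valued random variable are its point masses and must be non-negative, so a negative $P_k$ rules out $\varphi_X$ being such a characteristic function. This establishes the claim: for all sufficiently small $\lambda > 0$, $\varphi_X$ is not a characteristic function.

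The only real point to verify is that no higher-order term can conspire to cancel the linear term $\alpha_k\lambda$ for small $\lambda$, and this is exactly where the finiteness of the expansion is decisive: for fixed $k$ the sum over $m$ terminates at $m=k$, so $\mathrm{e}^{\lambda}P_k$ is an honest polynomial and the domination of $\alpha_k\lambda$ near $\lambda=0$ is immediate. Thus no tail estimate on the sequence $\{\alpha_j\}$ is needed once $k$ is fixed, and the potential obstacle dissolves into elementary bookkeeping.
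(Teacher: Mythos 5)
Your proposal is correct and follows essentially the same route as the paper: both read off from the explicit expansion \eqref{eq:DPCP-explicit} that $P_k/\lambda \to \alpha_k < 0$ as $\lambda \to 0^{+}$, so the $k$-th mass is negative for small $\lambda$, contradicting non-negativity of the point masses of any integer-valued distribution with that characteristic function. Your extra observation that $\mathrm{e}^{\lambda}P_k$ is a polynomial in $\lambda$ of degree at most $k$ is a nice way of justifying the limit that the paper simply asserts, but it is the same argument.
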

\begin{proof}
  The p.m.f.~of $X$ is \eqref{eq:DPCP-explicit}. For $m \geqslant 1$, we have $\mathop {\lim
}\limits_{\lambda \to 0} \frac{{{P_m}}}{\lambda } = {\alpha _m} <
0$. This is in contradiction of the fact that $P_{m}/\lambda > 0$.
\end{proof}

\textbf{Remark 4}  This theorem shows that we should not replace $X$ by $X(t)$ for $t
  \in [0,t)$ (namely, replace $\lambda $ by $\lambda t$) in
(4) to have the DPCP process. The DPCP processes have a strange
property that the $t$ can only belong to the set $\Lambda (X)$ which have semigroup properties.

It is well-known that Poisson processes are characterized by
stationary and independent increments.
\begin{proposition}[Axioms for the Poisson process]
  \label{prop-ax-poisson-process} If a nonnegative integer-valued
  process $\{ X(t),t \ge 0\} $ satisfies the following conditions:
\begin{enumerate}[label={\upshape(\roman*)}]
  \item Initial condition $X(0) = 0$ and $0 < \Pr(X(t) > 0) < 1$.
  \item $X(t)$ has stationary increments.
  \item $X(t)$ has independent increments.
  \item Let discrete probability mass be given by ${P_i}(t) = \Pr(
    \left. {X(t) = i} \right|X(0) = 0)$, with the probability of $1$
    and $i \geqslant 2$ events taking place in $[{t},\Delta t + {t})$
    given by ${P_1}(\Delta t) = {\lambda }\Delta t + o(\Delta t)$ and
    $\sum\limits_{i = 2}^\infty {{P_i}(\Delta t)} = o(\Delta t)$,
    respectively.
\end{enumerate}
Then $\{X(t),\ t \ge 0\} $ is a Poisson process.
\end{proposition}

\begin{proposition}[Axioms for the discrete compound Poisson process]
  \label{prop-ax-dcpp}
  If a nonnegative integer-valued process $\{ X(t),t \ge 0\} $
  satisfies each of (i)--(iii) in Proposition
  \ref{prop-ax-poisson-process} and in addition satisfies
  \begin{enumerate}[label={\upshape(\roman*)}]
    \setcounter{enumi}{3}
  \item ${P_k}(\Delta t) = {\alpha _k}\lambda \Delta t + o(\Delta t)$
    where $\sum\limits_{k = 1}^\infty {{\alpha _k}} = 1$ and $0 \le
    {\alpha _k} \le 1,$
  \end{enumerate}
  then $\{ X(t),t \ge 0\} $ is a discrete compound Poisson process.
\end{proposition}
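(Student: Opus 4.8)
The plan is to derive the p.g.f.\ of $X(t)$ in closed form and recognise it as the DPCP p.g.f.\ \eqref{eq:DPCP}. Write $P_i(t) = \Pr(X(t) = i \mid X(0) = 0)$ and set $G(z,t) = \sum_{i=0}^\infty P_i(t)\, z^i$ for $|z| \le 1$. The initial condition (i) gives $G(z,0) = 1$. Since the increments are stationary and independent (axioms (ii)--(iii)), the increment $X(t+s) - X(t)$ is independent of $X(t)$ and distributed as $X(s)$; conditioning on $X(t)$ and multiplying the corresponding p.g.f.'s yields the multiplicative functional equation $G(z, t+s) = G(z,t)\,G(z,s)$.

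First I would compute the infinitesimal p.g.f.\ $G(z,\Delta t)$. Using axiom (iv), $P_k(\Delta t) = \alpha_k \lambda \Delta t + o(\Delta t)$ for $k \ge 1$, and since $\sum_{k\ge1}\alpha_k = 1$ the no-jump probability is $P_0(\Delta t) = 1 - \sum_{k\ge1} P_k(\Delta t) = 1 - \lambda \Delta t + o(\Delta t)$. Hence
\[
G(z, \Delta t) = 1 + \lambda \Delta t \sum_{k=1}^\infty \alpha_k (z^k - 1) + o(\Delta t),
\]
where the series converges absolutely on $|z|\le 1$ because $\sum_k \alpha_k = 1$ and $|z^k - 1| \le 2$. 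Combining this with the functional equation gives $G(z, t+\Delta t) - G(z,t) = G(z,t)\big[\lambda \Delta t \sum_k \alpha_k(z^k-1) + o(\Delta t)\big]$; dividing by $\Delta t$ and letting $\Delta t \downarrow 0$ produces the linear ODE (in $t$, for each fixed $z$)
\[
\frac{\partial}{\partial t} G(z,t) = \lambda \Big(\sum_{k=1}^\infty \alpha_k (z^k - 1)\Big)\, G(z,t), \qquad G(z,0) = 1.
\]
Solving this yields $G(z,t) = \exp\{\lambda t \sum_{k=1}^\infty \alpha_k (z^k - 1)\}$, which is precisely the DPCP p.g.f.\ \eqref{eq:DPCP} with parameter $\lambda t$; since axiom (iv) imposes $0 \le \alpha_k \le 1$, it is in fact a genuine discrete compound Poisson p.g.f., and therefore $\{X(t)\}$ is a discrete compound Poisson process.

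The step I expect to be the main obstacle is the passage to $P_0(\Delta t) = 1 - \lambda\Delta t + o(\Delta t)$ together with the differentiability of $G$ in $t$, both of which hinge on controlling the infinite family of $o(\Delta t)$ remainders uniformly in $k$. Term by term, axiom (iv) only yields $\sum_{k=1}^\infty [P_k(\Delta t) - \alpha_k \lambda \Delta t] = o(\Delta t)$ \emph{if} the little-$o$ is uniform; this uniformity must be extracted from stationarity together with the identity $\sum_{k\ge1} P_k(\Delta t) = 1 - P_0(\Delta t)$, rather than obtained by summing the pointwise estimates naively. Once this uniformity and the resulting right-differentiability of $t \mapsto G(z,t)$ are secured, the remaining computation is routine. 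As an alternative route that avoids the p.g.f.\ functional equation, one can instead set up the Kolmogorov forward system $P_n'(t) = -\lambda P_n(t) + \lambda \sum_{k=1}^n \alpha_k P_{n-k}(t)$ directly from $P_n(t+\Delta t) = \sum_{k=0}^n P_{n-k}(t)\, P_k(\Delta t)$; its solution (with $P_0(t) = e^{-\lambda t}$) is the DPCP mass function \eqref{eq:DPCP-explicit} with parameter $\lambda t$, whose masses obey L\'evy's recurrence \eqref{eq:recursive}.
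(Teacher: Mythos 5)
The paper does not actually prove this proposition: it is quoted as a classical fact, with pointers to Section~3.8 of \cite{haight67} (the ``stuttering Poisson process'') and to \cite{janossy50}. So your derivation does not parallel an argument in the text --- it supplies one. The route you take (the semigroup identity $G(z,t+s)=G(z,t)G(z,s)$ from axioms (ii)--(iii), the infinitesimal expansion of $G(z,\Delta t)$ from axiom (iv), and the resulting ODE $\partial_t G(z,t)=\lambda\bigl(\sum_{k}\alpha_k(z^k-1)\bigr)G(z,t)$ with $G(z,0)=1$) is the standard one and is essentially what the cited sources do; your alternative via the Kolmogorov forward system and L\'evy's recursion \eqref{eq:recursive} is equally classical. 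Since $0\le\alpha_k\le1$, the limit $G(z,t)=\exp\{\lambda t\sum_k\alpha_k(z^k-1)\}$ is a genuine discrete compound Poisson p.g.f., which is the asserted conclusion.

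The one step that needs more than you give it is exactly the one you flag. Axiom (iv) is a statement for each fixed $k$, so $P_0(\Delta t)=1-\lambda\Delta t+o(\Delta t)$ does not follow by summing countably many $o(\Delta t)$ remainders. Your proposed repair is pointed in the right direction but incomplete as stated: since the increments are nonnegative, $\{X(t+s)=0\}=\{X(t)=0\}\cap\{X(t+s)-X(t)=0\}$, so $P_0(t+s)=P_0(t)P_0(s)$ and hence $P_0(t)=e^{-\mu t}$ for some $\mu\in(0,\infty)$ by axiom (i); Fatou's lemma applied to $P_k(\Delta t)/\Delta t$ then yields only the one-sided bound $\lambda=\sum_k\lambda\alpha_k\le\mu$. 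The reverse inequality $\mu\le\lambda$ (no jump rate escaping to large $k$) is precisely where an honest proof must either assume the $o(\Delta t)$ is uniform in $k$ --- note that the Poisson axiomatization in Proposition \ref{prop-ax-poisson-process} builds this in explicitly via the condition $\sum_{i\ge2}P_i(\Delta t)=o(\Delta t)$, whereas condition (iv) here does not --- or else invoke the structure of nondecreasing integer-valued processes with stationary independent increments (such a process is automatically compound Poisson with some rate $\mu$ and jump law $\beta$, and matching $\mu\beta_k=\lambda\alpha_k$ termwise and summing forces $\mu=\lambda$). With that point settled, the rest of your computation is correct.
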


Proposition \ref{prop-ax-poisson-process} is called the \emph{Bateman
  theorem}, see Section 2.3 of \cite{haight67}. The
processes in Proposition \ref{prop-ax-dcpp} can be seen in Section 3.8
of \cite{haight67}, where they are known as \emph{stuttering Poisson
  processes}. For the case discrete r.v., \cite{janossy50} extended axioms for the Poisson process to Proposition \ref{prop-ax-dcpp}. We
may ask if there exist similar extensions of Proposition
\ref{prop-ax-dcpp} to DPCP
processes. As the contradiction in the proof of Corollary \ref{cor:neg-alpha} shows, if
some $a_{k}$ in \eqref{eq:feller-CP-pgf} are negative, this will yield
${P_k}(\Delta t) = {\alpha _k}\lambda \Delta t + o(\Delta t) <
0$.
However, ${P_k}(\Delta t)$ must be non-negative. So there are not DPCP processes $X(t)$ on $t \in [0, + \infty )$ as some $a_{k}$ in
\eqref{eq:feller-CP-pgf} are negative.

To avoid the above contradictions, we highlight a strange
property of DPCP processes on the semigroup $\Lambda (X)$.

The properties of discrete pseudo compound
  Poisson: The DPCP process $\{ X(t),t \in \Lambda (X)\}$ satisfies
  the following conditions:
\begin{enumerate}[label={\upshape(\roman*)}]
\item Initial condition $X({t_0}) = 0$, where $t_{0} = \inf \{ \lambda
  > 0: [\lambda ,\infty ) \subset \Lambda (X)\}$ and $0 < \Pr(X(t) > 0)
  < 1$.
\item $X(t)$ has stationary increments.
\item $X(t)$ has independent increments.
\end{enumerate}

\section{Conclusion and comment}
It was the great mathematician Leopold Kronecker who once said, ``God
made the integers; all else is the work of man.'' It is in the spirit
of that proverb that the present work deals with discrete(integer-valued) r.v..

Feller's characterization of discrete ID (namely, non-negative
integer-valued infinitely divisible distributions) says that a
distribution is discrete ID if and only if it is discrete compound
Poisson. Further, we define the discrete pseudo compound
Poisson (DPCP) distribution whose p.g.f.~has the exponential
polynomial form ${e^{P(z)}}$, where $P(z)$ may contain negative
coefficients (except coefficient $P(0)$). Using the definition of
generalized infinitely divisible, then, we have an extension of
Feller's characterization which is that a distribution is discrete
quasi-ID if and only if it is discrete pseudo compound Poisson, which
is made possible by Prohorov's theorem for bounded and tight signed
measures. This theorem could be applied to the continuity theorem for
p.g.f.'s with negative coefficients, or the continuity theorem for
characteristic functions with signed measure.

If
$\exp \left\{ \lambda \sum \limits_{k =
    1}^{\infty} \alpha_{k} ( \mathrm{e}^{\mathrm{i}k\theta} - 1)
\right\}$ is characteristic function, the parameter $\lambda$ can not tend to $0$ when some $\alpha_{k}$ take
negative values. This
property is related to a characteristic function's J{\o}rgensen set,
which is the set such that the positive real power of the
characteristic function maintains as a characteristic function. It is
easy to see that the J{\o}rgensen set of an infinitely divisible
characteristic function is ${{\text{R}}^ + }$. To find the
J{\o}rgensen set of a quasi-infinitely divisible characteristic
function is an open problem; there are only some special cases of
J{\o}rgensen sets in the literature, see \cite{nakamura13},
\cite{nahla11}, \cite{letac02}, \cite{albeverio98}.

\section{Acknowledgments}
The authors want to thank Prof. Sz\'{e}kely, G. J. for his helpful suggestion. This work was partly supported by the National Natural Science Foundation of China (No.11201165). After \cite{zhang16} was accepted in Feb 2014, the first edition manuscript of this work was originally written by studying some concepts and references in \cite{sato14}. More theoretical results of quasi-infinitely divisible distributions can be found in recent manuscript \cite{lindner17}.

\section{References}

\end{document}